\definecolor{halfgray}
{gray}{0.55}
\definecolor{webgreen}
{rgb}{0,0.4,0}
\definecolor{webbrown}
{rgb}{.8,0.1,0.1}
\definecolor{red}
{rgb}{1,0,0}
\newcommand \R {{ \mathbb R}}
\def\C{{\mathbb C}}
\newcommand \N {{ \mathbb N}}
\newcommand \re {{%
\operatorname{Re}
}}
\newcommand \im {{%
\operatorname{Im}
}}
\newtheorem{theorem}{Theorem}[section]
\newtheorem {lemma} [theorem]{Lemma}
\newtheorem {proposition}[theorem]{Proposition}
\newtheorem{corollary}[theorem]{Corollary}
\newtheorem{question}[theorem]{Question}
\title[Finite codimension stability of invariant surfaces]%
{Finite codimension stability of invariant surfaces}
  \author{Giovanni Forni}
\address{Department  of Mathematics\\
  University of Maryland \\
  College Park, MD USA}
\email
    {gforni@math.umd.edu}
\keywords
      {translation surfaces, rational polygonal billiards, KAM theory, para-differential operators}
\subjclass
        {37C75, 37C83, 35S50}
\date{\today}
\begin{document}

\def\echo#1{\relax}
    
\begin{abstract}
Following recent work of T.~Alazard and C.~Shao \cite{AlSh} on applications of para-differential calculus  to smooth conjugacy and stability problems for Hamiltonian systems, we  prove finite codimension stability  of invariant surfaces (in finite differentiability classes) of flat geodesic flows on translation surfaces.  
The result is also based  on work of the author \cite{F97},
\cite{F21} on the cohomological equation for translation flows.
\begin{sloppypar}
  \end{sloppypar}
\end{abstract}
\maketitle
\section{Introduction}

The billiard in a completely integrable rational polygon, as well the geodesic flow on a flat torus, are basic examples of integrable Hamiltonian
systems: their phase space is entirely foliated by invariant $2$-dimensional tori on which the flow is linear. In this context the KAM theory 
implies that, under sufficiently small smooth perturbations of the Hamiltonian, a positive measure set of invariant tori persists and therefore the
perturbed system is not ergodic. 

\smallskip
In this paper, we prove an analogous result for a class of pseudo-integrable systems: (non-integrable) billiards in rational polygons and, more generally,
geodesic flows for the flat metric on translation surfaces. The phase space of these systems is foliated by invariant surfaces of higher genus (in the
non-integrable case) and it is natural to ask whether any of such surfaces persists under small smooth perturbations of the Hamiltonian.

\smallskip
With this problem in mind the author proved a result \cite{F97} on the linearized problem, that is, on the so-called cohomological equation for translation flows
on higher genus surfaces. This work found that, contrary to the case of (Diophantine) linear toral flows, the Lie derivative operator for translation flows
on higher genus surface has range of finite codimension in every finite differentiability class, and of infinite codimension in the space of infinitely differentiable
functions. 

In addition, the obstructions to the existence of solutions depend on the specific translation flow considered, while in the case of the torus
the Lie derivative operator has range of codimension one, transverse to the space of constant functions, and the only obstruction, the mean, is independent
of the linear flow. 

\smallskip
As a consequence of this result, it was natural to conjecture, transposing the  results on the linearized problem to the non-linear problems, that typical translation flows would
be stable with finite codimension in any finite differentiability class under smooth perturbations, and that the typical invariant surfaces of pseudo-integrable 
systems would be similarly stable with finite codimension, hence there would exists a finite codimension family of non-ergodic perturbations. 

\smallskip
Unfortunately the presence of  distributional obstructions of growing Sobolev order creates serious difficulty in the application of the KAM or Nash-Moser  iteration
method, which we have not been overcome to date. 

\smallskip
These difficulties are caused by the feature of the Nash-Moser iteration of letting the high norms grow (while controlling
the decay of low norms), as a consequence of the application of smoothing operators at each step of the iteration. Therefore the values of the higher order obstructions on the data
of the linearized equation may blow up in the iteration process which therefore may fail to convergence. 

\smallskip
For the conjugacy problem, S.~Marmi, P.~Moussa and J.-C.Yoccoz \cite{MMY12} were able to bypass the difficulty
by a method inspired to M.~Herman's fixed point solution of the conjugacy problem for typical (Roth-type) circle rotations  \cite{He85}, based on a ``Schwarzian derivative trick''. 

\smallskip
The work \cite{MMY12}  was made possible by improved regularity estimates for solutions of the cohomological equation for Interval Exchange Transformations (and translation flows) derived in \cite{MMY05} by a dynamical approach completely different from the harmonic analysis methods of \cite{F97}. 

In fact, Herman's method requires a loss of at most $2-$ derivatives for the solutions of the cohomological equation, a result which is out of the reach of the method of \cite{F97}  (see also \cite{F21}).  In \cite{MMY05} the loss of derivatives is (at most) $1+ BV$, while in \cite{F97} is at best $3 +$ (and this result is only achieved in \cite{F21}).

\smallskip
For low regularity conjugacies (that is, for conjugacies of class $C^1$) and for the global conjugacy problem, S. Ghazouani \cite{Gh21}  and S.~Ghazouani and C.~Ulcigrai \cite{GU23} 
have developed a renormalization approach. In \cite{Gh21} a $C^1$ conjugacy result is proved for Interval Exchange Transformations of periodic type, while in \cite{GU23} (followed
by a refined result \cite{GU25} on the regularity of the conjugacy),
the authors prove a rigidity result, that is, a higher genus version of Herman's global linearization theorem for circle diffeomorphisms \cite{He79}.

\smallskip
The analogous question on the stability of invariant (higher genus) surfaces under smooth perturbations has so far remained open. 

\medskip
In this paper we follow the para-differential calculus approach of T.~Alazard and C.~Shao \cite{AlSh} and give a proof of finite codimension stability of a typical invariant
surface. Indeed, the para-differential approach appears especially powerful to treat non-linear problems with finite codimension, increasing with the regularity class
or with infinite codimension, for which a KAM approach is problematic and has not so far been implemented. We believe that the para-differential method
could lead to another proof of the linearization result \cite{MMY12}, possibly with somewhat stronger regularity assumptions, but under ``Diophantine'' conditions weaker than
the Roth-type condition assumed there.

\medskip
Our main result is the following 
\begin{theorem}
\label{thm:main}
Let $(M, \omega)$ any translation surface. For almost all $\xi \in P^1(\R^2)$, the invariant $2$-dimensional surface $M_\xi$ in a given energy level for the flat geodesic 
flow given by all tangent vector parallel to $\xi$ and fixed norm, is stable with finite codimension under smooth perturbations in the the following sense.  

There exists $s_0>0$ such that 
for all $s>s_0$ there exists a local subvariety  $\mathcal H_s(\xi)$ (a priori dependent on $\xi \in P^1(\R^2)$)  of finite codimension $h_s\in \N$ of the space of Hamiltonians  such that for all Hamiltonians 
sufficiently close to the Hamiltonian$H_0$ of the flat geodesic flow in the Sobolev space $H^s(M)$ and equal to $H_0$  near the set $\Sigma:=\{\omega =0\}$, 
the Hamiltonian flow of $H$ has an invariant surface $M^H_\xi$ of genus equal to he genus of $M$. The invariant surface   $M^H_\xi$ is an $H^t(M)$-graph over 
$M_\xi$  for $t<s-s_0$ and the Hamiltonian flow of $H$ on $M^H_\xi$ is $H^t(M)$-conjugated to the translation flow given by the restriction of the flat geodesic flow to $M_\xi$. 
The codimension $h_s$ of the subvariety $\mathcal H_s(\xi)$ grows linearly in $s>s_0$ and in the genus of the surface $M$ and the cardinality of $\Sigma$.
\end{theorem}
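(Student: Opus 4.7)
The plan is to realize $M^H_\xi$ as the graph $\{y=u(x)\}$ of a function $u:M_\xi\to\R$ in a tubular coordinate system $(x,y)$ on the energy level in which $M_\xi=\{y=0\}$ and $H_0$ induces the translation vector field $L_\xi$ along $M_\xi$. The condition that $X_H$ be tangent to this graph is a quasilinear first-order equation $\mathcal F(H,u)=0$ whose linearization at $(H_0,0)$ is the cohomological equation $L_\xi u = f(H)$, where $f(H)$ depends linearly on the transverse part of $H-H_0$. By the results of \cite{F97,F21}, for almost every direction $\xi\in\Proj^1(\R^2)$ (those Diophantine for the Teichmüller flow) this equation admits a Sobolev solution with a fixed loss $s_0$ of derivatives, provided $f(H)$ annihilates a finite-dimensional space $\mathcal I_s(\xi)$ of $L_\xi$-invariant distributions whose dimension grows linearly in $s$, in the genus $g$ of $M$, and in $|\Sigma|$; this dimension will determine $h_s$.

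Following \cite{AlSh} I would replace the Nash–Moser scheme by a paradifferential fixed-point iteration at \emph{fixed} Sobolev regularity $H^t$ with $t<s-s_0$, since the Sobolev order of the obstructions in $\mathcal I_s(\xi)$ grows with $s$ and the smoothing step of Nash–Moser spoils their control. Via Bony's paralinearization one writes $\mathcal F(H,u)=T_{A(u,H)}\,u+\mathcal R(H,u)$, where $T_{A(u,H)}$ is a paradifferential operator whose principal symbol is an $H^t$-small perturbation of $L_\xi$ and $\mathcal R$ is strictly smoother than the principal term. Since no smoothing is needed, the right-hand side at each iterate has $H^s$-norm controlled by $\|H-H_0\|_{H^s}$, so the values on it of the distributional obstructions remain a priori bounded---which is precisely the step at which the KAM iteration had broken down.

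To close the iteration one constructs a right inverse of $T_{A(u,H)}$ modulo a perturbed cokernel obtained by deforming $\mathcal I_s(\xi)$ along the symbol $A(u,H)$, and requires the $h_s=\dim\mathcal I_s(\xi)$ resulting obstructions on the data to vanish simultaneously. By the implicit function theorem applied to this finite-dimensional map and the generic non-degeneracy of the pairing between Hamiltonian perturbations and invariant distributions, the condition cuts out the local subvariety $\mathcal H_s(\xi)\subset H^s(M)$ of the asserted codimension; the requirement that $H=H_0$ near $\Sigma$ is imposed throughout and is transverse to the obstructions. The $H^t$-graph structure of $M^H_\xi$ then follows from the fixed point, and its conjugacy to the translation flow on $M_\xi$ is obtained by a second paradifferential fixed point for the reparametrizing map, whose obstructions are again in $\mathcal I_s(\xi)$ so that no further codimension is needed.

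The hard part will be the second and third steps together: showing that the paradifferential symbolic calculus genuinely controls the dependence of the deformed cokernel on $(H,u)$ in the appropriate dual Sobolev topology, uniformly along the iteration. Because the distributions in $\mathcal I_s(\xi)$ are of unbounded Sobolev order as $s\to\infty$ and are constructed in \cite{F97,F21} by global harmonic/renormalization methods on $M$, the required stability statement is a Kato-type perturbation theorem for the cokernel of the paradifferential cohomological operator, and I expect it to need a quantitative refinement of the estimates of \cite{F97,F21} that tracks the dependence of each invariant distribution on the symbol of the deformed vector field, so that the finite-codimension condition can be imposed once and for all on $H$ rather than readjusted at every iterate.
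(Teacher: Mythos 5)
Your high-level strategy (paralinearization plus a fixed-point scheme at fixed Sobolev regularity to avoid the loss-of-derivatives issue with the growing family of obstructions) matches the paper's. But there is a genuine gap at the decisive step, which you yourself flag: you propose to invert the paradifferential operator modulo a cokernel ``obtained by deforming $\mathcal I_s(\xi)$ along the symbol $A(u,H)$'' and then worry, correctly, that you would need a Kato-type perturbation theorem tracking how each invariant distribution moves with the symbol. That is a hard analytic problem and the paper does not solve it --- it avoids it entirely. The paper does not parametrize the invariant surface as a scalar graph $u:M_\xi\to\R$ but as an embedding $u:M\to M\times\R^2$, and exploits the symplectic ``automatic reducibility'' identity of de la Llave--Gonz\'alez--Jorba--Villanueva (Lemma~4.1 of the paper, paralinearized as in \cite{AlSh}): conjugating the linearized operator by $T_{M[u]}$ puts it in the lower-triangular form $\bigl(\begin{smallmatrix}0&T_{S[u]}\\0&0\end{smallmatrix}\bigr)-X_\xi$, i.e.\ a nilpotent paradifferential block plus the \emph{unperturbed} constant-coefficient vector field $X_\xi$. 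The system is then solved component-by-component as two successive $X_\xi$-cohomological equations, and the obstructions are the \emph{fixed} invariant distributions $\chi_i$ of $X_\xi$ (merely pre-composed with the bounded invertible para-product $T_{M[u]}^{-1}$). No deformed cokernel ever appears, and no quantitative refinement of \cite{F97,F21} beyond the existing Sobolev estimates is needed.

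Two further consequences of that structural choice are worth noting. First, the surjectivity of the obstruction map $H\mapsto P[u(H)]$ is established directly at $H=H_0$, where $M[u_0]=\mathrm{Id}$ and $S[u_0]$ is constant, so the linearized obstruction is exactly $h\mapsto (d_i)$ with $X_\xi v + \sum_i d_i[\chi_i]=-D_H\mathcal F_\xi(H_0,u_0)h$; there is no need to appeal to ``generic non-degeneracy'' of a pairing. Second, because the unknown is the embedding $u$ itself, once a fixed point is found and the Neumann-series argument forces $\mathcal F_\xi(H,u)=X_H\circ u - X_\xi(u)=0$, the map $u$ \emph{is} the conjugacy between $X_\xi$ on $M_\xi$ and $X_H$ on $u(M)$; your proposal would require a second, separate fixed-point argument to produce the conjugating map. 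If you want to salvage the graph approach, you would either need to prove the cokernel-stability statement you postponed, or reformulate so as to recover the triangular structure --- which in effect means switching to the embedding parametrization.
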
 

From Theorem \ref{thm:main} we derive the following result for billards in rational polygons.

\begin{corollary} For any  rational polygon $P$ and for almost all $\xi \in P^1(\R^2)$, the invariant $2$-dimensional surface $M_\xi$ for the billiard flow in $P$, endowed with the flat metric $R_0$,   in direction $\xi$ and in a given energy level, is stable with finite codimension under smooth perturbations in the the following sense.   
There exists $s_0>0$ such that for all $s>s_0$ there exists a local subvariety  $\mathcal K_s(\xi) $ (a priori dependent on $\xi \in P^1(\R^2)$)  of finite codimension $k_s\in \N$ of the space of metric on $P$ sufficiently close to the  flat metric $R_0$
 in the Sobolev space $H^s(M)$, and equal to the flat metric near its corners, such that the billiard flow in $P$ with respect to any metric $R\in \mathcal K_s$  has 
 an invariant surface $M^R_\xi$ of genus equal to the genus of the unfolding of $P$. The invariant surface $M^R_\xi$ is an $H^t(M)$-graph over 
$M_\xi$  for $t<s-s_0$ and the billiard flow for $P$, endowed with the metric $R$,  on $M^R_\xi$ is $H^t(M)$-conjugated to the translation flow given by the restriction 
of the flat metric billiard flow to $M_\xi$. 
The codimension $k_s$ of the subvariety $\mathcal K_s(\xi)$ grows linearly in $s>s_0$ and in the genus  and the cardinality of the set of
conical singularities of the translation surface $M_P$ unfolding of~$P$.

\end{corollary}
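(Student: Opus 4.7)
The plan is to reduce the statement to Theorem \ref{thm:main} applied to the Zemlyakov--Katok unfolding of $P$. Because $P$ is rational, the finite group $G$ generated by reflections in the sides of $P$ acts on directions with finite orbits, and one obtains a translation surface $M_P$ by gluing $|G|$ isometric copies of $P$ along the reflections. The billiard flow on $(P, R_0)$ in direction $\xi$ is identified, via unfolding, with the flat geodesic flow on $M_P$ restricted to the invariant surface $M_\xi$, while the projection $M_P \to P$ is the quotient by $G$. The corners of $P$ unfold precisely to the conical singular set $\Sigma \subset M_P$.

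First I would translate metric perturbations into Hamiltonian perturbations of the geodesic flow. For a Riemannian metric $R$ on $P$ the geodesic Hamiltonian $H_R(q,p) = \tfrac12 R^{ij}(q) p_i p_j$ on $T^*P$ depends smoothly on $R$, so $H^s$-closeness of $R$ to $R_0$ yields $H^s$-closeness of $H_R$ to $H_0$ on any fixed energy level. A metric $R$ equal to $R_0$ near the corners lifts uniquely to a $G$-invariant metric $\widetilde R$ on $M_P$ equal to the flat metric near $\Sigma$, producing a $G$-invariant Hamiltonian $H_{\widetilde R}$ on $T^*M_P$ that satisfies the hypotheses of Theorem \ref{thm:main}. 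The assignment $R \mapsto H_{\widetilde R}$ is a continuous linear map between the relevant Sobolev spaces of constrained metrics and of Hamiltonians.

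Next I would apply Theorem \ref{thm:main} on the unfolded translation surface $M_P$ to the Hamiltonian $H_{\widetilde R}$. For almost every $\xi \in P^1(\R^2)$ and every $s > s_0$ this yields a local subvariety $\mathcal H_s(\xi) \subset H^s(M_P)$ of finite codimension $h_s$ such that every $H \in \mathcal H_s(\xi)$ admits an invariant surface $M^H_\xi$ which is an $H^t$-graph over $M_\xi$ and carries a flow $H^t$-conjugate to the translation flow on $M_\xi$. I would then set $\mathcal K_s(\xi)$ to be the preimage of $\mathcal H_s(\xi)$ under the unfolding map $R \mapsto H_{\widetilde R}$; its codimension $k_s$ is at most $h_s$ and therefore grows linearly in $s$, in the genus of $M_P$, and in the cardinality of $\Sigma$, as required.

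The hard part is to ensure equivariance: the invariant graph $M^{H_{\widetilde R}}_\xi$ produced by Theorem \ref{thm:main} must be $G$-invariant in order to descend to an invariant surface for the billiard flow on $(P, R)$. Because $H_{\widetilde R}$ is $G$-invariant, the para-differential fixed-point equation defining $M^{H_{\widetilde R}}_\xi$ is $G$-equivariant and the $G$-action preserves the small $H^t$-ball in which the fixed point is sought, so uniqueness of the solution forces $G$-invariance of $M^{H_{\widetilde R}}_\xi$. The desired invariant billiard surface is then $M^R_\xi := M^{H_{\widetilde R}}_\xi / G$, and the $H^t$-conjugacy of the billiard flow to the restriction of the flat billiard flow to $M_\xi$ descends from the $G$-equivariant conjugacy given by Theorem \ref{thm:main}. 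Extracting this uniqueness and $G$-equivariance from the Alazard--Shao scheme \cite{AlSh} underlying Theorem \ref{thm:main} is the only nontrivial step beyond a direct application of the main theorem.
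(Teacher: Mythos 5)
Your reduction via the Zemlyakov--Katok unfolding is the natural one, and it matches what the paper intends: the Corollary is stated immediately after Theorem~\ref{thm:main} as a direct consequence, with no separate proof given, so the unfolding plus the $G$-equivariance/uniqueness argument you sketch is exactly the expected route. The observation that $G$-invariance of the lifted Hamiltonian $H_{\widetilde R}$ together with uniqueness of the Alazard--Shao fixed point forces $G$-invariance of the invariant graph is the right key point, and it is the only genuinely nontrivial step.

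There is, however, one gap in the codimension argument. You deduce that $k_s$ is \emph{at most} $h_s$, since $\mathcal K_s(\xi)$ is the preimage of $\mathcal H_s(\xi)$ under the linear lift map $R \mapsto H_{\widetilde R}$. But the Corollary asserts that $k_s$ \emph{grows linearly} in $s$, the genus, and $\vert\Sigma\vert$, which is a lower bound; the inequality $k_s \leq h_s$ alone is compatible with $k_s$ being bounded, or even zero, if the image of the lift map happens to lie inside $\mathcal H_s(\xi)$ or fails to be transverse to it. What is actually needed is that the finitely many obstruction distributions (the invariant distributions $\mathcal I^{s,k}_\xi(M_P)$ and their Green-operator compositions from Corollary~\ref{cor:CE}) remain linearly independent when restricted to the subspace of $G$-invariant Hamiltonians arising from metrics on $P$ equal to $R_0$ near the corners. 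Since the translation flow direction $\xi$ is generic, one expects these distributions to be $G$-invariant themselves (or to decompose into isotypic components with the trivial component still of the right dimension), but this transversality claim should be stated and verified rather than subsumed into the bound $k_s \leq h_s$. Without it the ``grows linearly'' clause is not established.

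A smaller point: you pass between $T^\ast P$ and $TM_P$ without comment. The paper works with the trivialization of $TM$ over $M \setminus \Sigma$ by the flat frame $\{X, Y\}$ and requires the Hamiltonian perturbation to vanish near $TM\vert\Sigma$; you should note that a metric perturbation of $R_0$ supported away from the corners does produce, after unfolding and passing to the tangent-bundle picture via the flat metric, a perturbation $f$ vanishing in a neighborhood of $TM_P\vert\Sigma$, so that the hypotheses of Theorem~\ref{thm:main} are indeed met.
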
 

\medskip
\noindent{\bf Acknowledgments.} The author is grateful Carlos Matheus for first telling him of the preprint~\cite{AlSh} and suggesting that the method may apply in the
context Interval Exchange Transformations or translation flows, and to T.~Alazard and C.~Shao for several discussions of their work and its potential applications, and
for suggestions about background and relevant literature. The author is grateful to N.~Tedesco for pointing out a calculation mistake in Lemma \ref{lemma:lin_id}
and to C.~Shao and N. Tedesco for the subsequent discussion about the formulas in the lemma.

\section{Sobolev spaces and para-differential operators}

In this section we recall the definition of the natural (weighted) Sobolev spaces on translation surfaces (see \cite{F97}, \cite{F21}), and  extend the para-differential 
formalism to translation surfaces (following \cite{AlSh}) . 

Let $(M, \omega)$ be a translation surface. Let $L^2(M, \omega)$ denote the space of square-integrable functions with respect to to the area form 
$dA_\omega = -(i/2) \omega \wedge \bar\omega$. Let $X$ and $Y$ denote the horizontal and vertical vector fields defined by the conditions
$$
\imath_X \re(\omega) = - \imath_Y \im(\omega) =  1\,, \quad \text{ and } \quad \imath_X \im(\omega) = \imath_Y \re(\omega) = 0\,.
$$
With respect to a canonical coordinate $z$ centered at a regular point of $(M, \omega)$ (that is, a coordinate such that $\omega= dz$, we 
have $dA_\omega =  dx \wedge dy$ and 
$$
X= \frac{\partial}{\partial x} \quad \text{ and } \quad Y= \frac{\partial}{\partial y}  \,.
$$
With respect to a canonical coordinate $z$ centered at a cone point of $(M, \omega)$ of angle $2\pi (k+1)$ (that is, a coordinate such that $\omega= z^k dz$, we  
have $dA_\omega =\vert z \vert^k dx \wedge dy$ and 
$$
X=  \vert z \vert^{-2k} \Big( \re(z^k)  \frac{\partial}{\partial x}  - \im(z^k) \frac{\partial}{\partial y}\Big)   \quad \text{ and } \quad 
Y=  \vert z \vert^{-2k} \Big( \im(z^k) \frac{\partial}{\partial x}  + \re(z^k) \frac{\partial}{\partial y}\Big)   \,.
$$
In other terms, for any $p\in M$ a cone point of angle $2\pi (k+1)$, the map $\pi_k : U(p) \to D \subset \C$ defined on a neighbourhood
$U(p) \subset M$ such that $U(p) \cap \Sigma=\{p\}$ with respect to a canonical coordinate $z: U(p) \to D \subset \C$ as
$$
\pi_k (z) =  \frac{ z^{k+1} }{k+1} \,, \quad \text{ for all } z\in U(p)\,,
$$
is a $(k+1)$-fold branched cover of a neighborhood $D$ of $0\in \C$ such that $\pi_k^\ast (dz) = \omega \vert U(p)$ and
$$
(\pi_k)_\ast (X) = \frac{\partial}{\partial x} \quad \text{ and } \quad  (\pi_k)_\ast  (Y) = \frac{\partial}{\partial y}  \,.
$$
The (weighted) Sobolev spaces $H^s_\omega (M)$ (for $s \geq 0$)  can be defined as the subspaces of $f\in L^2(M, \omega)$ such that 
$u \in H^s_{loc} (M\setminus \Sigma)$ and for every cone point $p\in \Sigma$ (of angle $2\pi (k+1)$) there exists a function 
$F \in H^s(D)$ such that $f = (\pi_k)^\ast (F)$.  In short, for any translation atlas $\mathcal U:=\{(U, \pi_U)\}$ on $M$ (composed of charts
given by canonical coordinates)
$$
f \in H^s_\omega (M)  \Longleftrightarrow     f \vert U \in (\pi_U)^\ast  ( H^s ( \pi_U (U) ) \quad \text{ for all  } (U, \pi_U) \in \mathcal U\,.
$$
The norm on the space $H^s_\omega(M)$ can be defined for $s \in \N$ as follows:
$$
\Vert f \Vert _{H^s_\omega(M)}^2 = \sum_{\alpha+\beta \leq s}  \Vert X^\alpha Y^\beta  \Vert^2_{L^2(M, \omega)} \,, \quad \text{ for all } f \in H^\infty_\omega(M)\,.
$$
Another possible definition of Sobolev norms on translation surfaces is in terms of fractional powers of the Friederichs Laplacian. Let $\Delta_F$ denote
the Friederichs extension of the flat Laplacian with domain $H^\infty_\omega(M)$ and let $\{\lambda_n\}_{n\in \N}$ denote the sequence of 
eigenvalues of the negative of the Friederichs Laplacian $-\Delta_F$ and let $\{ e_n\}$ a corresponding orthonormal system of eigefunctions. 

The Friederichs (weighted) Sobolev norms (for all $s \in \R$) are 
$$
\Vert f \Vert _{\bar H^s_\omega(M)}^2 =  \sum_{n\in \N} (1+ \lambda_n)^s \vert \langle f, e_n\rangle\vert^2 \,, \quad \text{ for all } f \in H^\infty_\omega(M)\,.
$$
By definition the Friederichs Sobolev norms are interpolation norms. 

\smallskip
The fractional (weighted) Sobolev norms can then be defined as follows: for any $s= k + \sigma \geq 0$ with $k\in \N$ and $\sigma\in [0, 1)$, we define,
for all $ f \in H^\infty_\omega(M)$, 
$$
\Vert f \Vert_{H^s_\omega(M)}^2 = \Vert f \Vert_{H^{k}_\omega(M)}^2 + \sum_{\alpha + \beta=k}  \Vert X^\alpha Y^\beta f \Vert_{\bar H^{\sigma}_\omega(M)}^2
+ \Vert  Y^\alpha X^\beta f \Vert_{\bar H^{\sigma}_\omega(M)}^2  \,.
$$
We have the following comparison result between weighted Sobolev norms, Friederichs weighted Sobolev norms and standard Sobolev norms:
\begin{lemma} (\cite{F21}, Lemma 2.11 )
The following continuous embedding and isomorphisms of Banach spaces hold:
\begin{itemize}
\item  $H^s(M) \subset  H^s_\omega(M)   \equiv  \bar H^s_\omega(M)$, \qquad for $0\leq s <1$;
\item  $H^s(M) \equiv H^s_\omega(M)   \equiv  \bar H^s_\omega(M)$,  \qquad for $s =1$;
\item $H^s_\omega(M)   \subset  \bar H^s_\omega(M) \subset H^s(M)$,  \qquad for $s>1$\,.
\end{itemize} 
For $s\in [0,1]$ the space $H^s(M)$ is dense in $H^s_\omega(M)$ and, for $s > 1$, the closure of $H^s_\omega(M)$  in $\bar H^s_\omega(M)$ or $H^s(M)$ 
has finite codimension.
\end{lemma}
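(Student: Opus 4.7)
The plan is to reduce every assertion to a local computation in a neighborhood $U(p)$ of each cone point $p\in\Sigma$, since away from $\Sigma$ all three spaces coincide with the standard $H^s$ space on a flat chart. After a partition of unity subordinate to a translation atlas together with one distinguished chart at each $p\in\Sigma$, it suffices to compare the three norms on functions supported in $U(p)$. The key tool is the branched chart $w=\pi_k(z)=z^{k+1}/(k+1)$: in the $w$-coordinate the vector fields $X,Y$ become $\partial_u,\partial_v$, the area form $dA_\omega$ becomes $du\,dv$, and the Friederichs Laplacian becomes the ordinary Laplacian on the disk $D$ with the self-adjoint realization coming from form-closure. Thus for any function representable via the pullback $\pi_k^*F$, the norms $\|f\|_{H^s_\omega}$ and $\|f\|_{\bar H^s_\omega}$ reduce, up to the factor $k+1$, to the two standard descriptions of $\|F\|_{H^s(D)}$.

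For $0\le s<1$ the equivalence $H^s_\omega\equiv \bar H^s_\omega$ is then the elementary fact that on $D$ the Gagliardo seminorm coincides with the spectral norm for the natural Laplacian; I would verify this by an angular Fourier decomposition around $p$ and the Bessel spectral decomposition of $\Delta_F$. The continuous embedding $H^s(M)\subset H^s_\omega(M)$ is obtained by interpolation between $s=0$, where it is an elementary consequence of the vanishing of the weight $|z|^{2k}$, and $s=1$, where a direct change of variables yields the equality $\int_{U(p)}(|Xf|^2+|Yf|^2)\,dA_\omega = \int_{U(p)}|\nabla_z f|^2\,dx\,dy$ since the Jacobian of $\pi_k$ exactly cancels the weight. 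The critical endpoint $s=1$ gives the three-fold equivalence at one stroke.

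For $s>1$ the chain $H^s_\omega\subset \bar H^s_\omega\subset H^s(M)$ is strict. The factor $|z|^{-2k}$ carried by $X,Y$ in the $z$-coordinate forces cancellations in the Taylor expansion of $f$ at $p$ which $H^s_\omega$ demands but $\bar H^s_\omega$ (which only sees integer powers of the Laplacian) does not, so the first inclusion follows from elliptic regularity away from $p$ combined with the local Friederichs realization; the second follows from the fact that the standard Laplacian on $M$ is dominated by $\Delta_F$ after multiplication by $|z|^{2k}$. For the finite codimension statement I would characterize the closure of $H^s_\omega(M)$ in $\bar H^s(M)$ (or $H^s(M)$) as the subspace of functions whose Taylor jet at each $p$ of order at most $\lfloor s\rfloor$ respects the $(k+1)$-fold rotational invariance built into $\pi_k$; the number of Taylor monomials obstructing this at each cone point is finite and grows linearly in $s$, giving a total codimension that grows linearly in $s$ and in the cardinality of $\Sigma$.

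The main obstacle is proving that this list of Taylor-type obstructions is both necessary and sufficient. For sufficiency I would construct $H^s_\omega$-approximations of any function satisfying all compatibility conditions by a mollification-and-symmetrization argument on the branched cover together with a careful cutoff near $p$. For necessity, each failure of a Taylor condition must produce a continuous linear functional on $H^s(M)$ vanishing on $H^s_\omega(M)$; I would obtain such a functional as a distributional evaluation of the corresponding Taylor coefficient at $p$, paired against a test function engineered to break the rotational symmetry required by the pullback. Every remaining step is routine interpolation, elliptic regularity, and bookkeeping with the weight $|z|^{2k}$.
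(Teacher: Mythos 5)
The lemma is quoted from \cite{F21}, Lemma~2.11, and no proof is given in the present paper, so there is no internal proof to compare against; I am assessing your proposal on its own terms. The local reduction and the $s=1$ Jacobian identity $\bigl(|Xf|^2+|Yf|^2\bigr)\,dA_\omega=|\nabla_z f|^2\,dx\,dy$ are indeed the correct entry points. However, there is a genuine conceptual gap in the claim that under $\pi_k$ ``the Friederichs Laplacian becomes the ordinary Laplacian on the disk $D$.'' The form domain of $\Delta_F$ near a cone point of angle $2\pi(k+1)$ is the full $H^1$ of the cone, and therefore contains every angular mode $e^{in\theta_z}$ in the canonical $z$-coordinate, $n\in\Z$; the eigenfunctions of $\Delta_F$ accordingly behave like $|z|^{|n|}e^{in\theta_z}$ for all $n$, whereas only the modes with $n\equiv 0\ (\mathrm{mod}\ k+1)$ descend through the $(k+1)$-sheeted map $\pi_k$ to functions on $D$. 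Identifying $\Delta_F$ with the pulled-back flat Laplacian on $D$ would force $\bar H^s_\omega$ to consist of pullbacks only, which would collapse the chain $H^s_\omega\subset\bar H^s_\omega\subset H^s(M)$ (both inclusions proper for $s>1$) rather than explain it. The comparison must instead be carried out mode by mode on the cone, tracking the fractional indicial exponents $|n|/(k+1)$ of the radial Bessel operators; this is precisely what separates the middle space from the outer two.

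Two smaller but still substantive issues. First, as the norms are defined in the present paper, $|\cdot|_{H^s_\omega}$ for $s\in(0,1)$ is already built out of $\|\cdot\|_{\bar H^\sigma_\omega}$, so $H^s_\omega\equiv\bar H^s_\omega$ in that range is nearly tautological, and the angular-Fourier verification you propose is effort in the wrong place; the substantive claim there is $H^s(M)\subset H^s_\omega(M)$, and your interpolation between $s=0$ and $s=1$ is only valid after one checks that $\{H^s_\omega\}_{0\le s\le1}$ is an interpolation scale (or exploits the Friederichs scale, which automatically is one). Second, the codimension: the Taylor monomials $z^a\bar z^b$ of total degree at most $\lfloor s-1\rfloor$ with $a$ or $b$ not divisible by $k+1$ number $O(s^2)$ at each cone point, so the obstruction count is quadratic in $s$, not linear as you assert. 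More importantly, you correctly identify ``the main obstacle'' as proving that these jet conditions are necessary and sufficient to characterize the closure, and then do not supply the sufficiency (density) argument; without it the finite-codimension claim is not actually established.
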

The weighted Sobolev spaces $H^{-s}_\omega(M)$  with negative exponents 
are defined as the dual spaces of the spaces $H^s_\omega(M)$ for all $s>0$.

\medskip
Para-differential operators on euclidean spaces were introduced by M. Bony in \cite{Bo81}  (see also \cite{Me08}).
Para-differential operators can be generalized to smooth compact manifolds working in local coordinates. 

A recent detailed introduction
of para-differential calculus on compact manifolds can be found in \cite{BGdP21}, \S 2, in \cite{Del15}, \S 3, in Chap. 6 of \cite{Sh22} 
(and to some extent earlier in \cite{Tay91},  Chap. 3, \cite{Tay11}, Chap 13. 10). 

Para-differential operators can therefore be extended to translation surfaces by defining para-differential operators locally with respect to canonical
charts. Since all the results are local and weighted Sobolev spaces are defined in terms of canonical charts, they generalize to our context 
first for functions in $H^\infty_\omega(M)$, then by continuity to functions in the spaces $H^s_\omega(M)$.  
In particular we can define para-products 
$Op(a):=T_a$  for functions $a \in L^\infty (M)$. 

\smallskip
We have the following results (see \cite{AlSh}, Props. 2.1 -2.3 and Prop. 3.5):

\begin{proposition} [Continuity of para-product operators]  \label{prop:para-cont}  If  $a\in L^\infty(M)$,  then $T_a$ is a bounded linear
operator from $H^s_\omega(M)$ to itself, and in fact there exists a constant $C_s>0$ such that
$$
\Vert T_a \Vert_{\mathcal L( H^s_\omega(M), H^s_\omega(M)) } \leq C_s \Vert a \Vert_{L^\infty(M)}\,.
$$
\end{proposition}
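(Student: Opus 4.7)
The plan is to reduce the statement to the Euclidean para-product continuity theorem of Bony (quoted in \cite{AlSh}, Prop.~2.1 for $\R^d$): for every $s\in \R$ there is $C_s>0$ with $\Vert T_b u\Vert_{H^s(\R^2)} \leq C_s \Vert b\Vert_{L^\infty(\R^2)} \Vert u\Vert_{H^s(\R^2)}$ for all $b\in L^\infty$ and $u\in H^s$. The transfer to $(M,\omega)$ is purely local, using that by construction (see \cite{AlSh}, \S 3 and Prop.~3.5) a para-differential operator on a manifold is obtained by patching Euclidean para-differential operators across a chosen atlas, with discrepancies between different choices absorbed into smoothing remainders.

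First, I fix a translation atlas $\mathcal U=\{(U,\pi_U)\}$ of canonical coordinates on $M$ (with $\pi_U$ either a biholomorphism onto an open set in $\C$ at regular points, or the branched cover $\pi_k$ of the statement at a cone point of angle $2\pi(k+1)$), together with a subordinate partition of unity $\{\chi_U\}$ with $\chi_U$ smooth on $M\setminus \Sigma$ and compactly supported in $U$. By the very definition of $H^s_\omega(M)$ recalled in the excerpt, the norm $\vert f\vert_{H^s_\omega(M)}$ is equivalent to $\bigl(\sum_U \Vert (\pi_U^{-1})^\ast(\chi_U f)\Vert_{H^s(\pi_U(U))}^2\bigr)^{1/2}$, since near a cone point every $f\in H^s_\omega$ is of the form $(\pi_k)^\ast F$ for some $F\in H^s(D)$ and $\chi_U$ can be arranged to be the pullback of a cut-off on $D$ as well.

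Next I decompose $T_a f = \sum_U T_a(\chi_U f)$ and use the localization property of para-differential calculus: up to a smoothing remainder (bounded from $H^s_\omega$ to $H^{s+N}_\omega$ for any $N$, with operator norm controlled by $\Vert a\Vert_{L^\infty}$), the operator $T_a$ applied to functions supported in $U$ coincides, after pushforward by $\pi_U$, with the Euclidean para-product $T_{a_U}$ on $\pi_U(U)\subset \C$, where $a_U$ is the local representative of $a$. Since pushforward preserves $L^\infty$ norms, $\Vert a_U\Vert_{L^\infty(\pi_U(U))} \leq \Vert a\Vert_{L^\infty(M)}$, and Bony's Euclidean continuity theorem gives
$$
\Vert T_{a_U}(\pi_U^{-1})^\ast(\chi_U f)\Vert_{H^s(\pi_U(U))} \leq C_s \Vert a\Vert_{L^\infty(M)} \Vert (\pi_U^{-1})^\ast(\chi_U f)\Vert_{H^s(\pi_U(U))}.
$$
Summing the squares over $U\in\mathcal U$ (a finite cover) and using the equivalence of norms recalled above yields the claimed bound, up to absorbing the smoothing remainders into the constant $C_s$.

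The main (minor) technical point to check carefully is the localization near cone points. There $\pi_k$ is a branched cover, so the push-forward by $\pi_k$ is only well-defined on functions invariant by the deck rotation of order $k+1$; but this is precisely the class of functions belonging to $H^s_\omega(M)$ near $p$, and para-products respect this symmetry because the operator $T_{a_U}$ on $D$ preserves invariance under multiplication by $(k+1)$-th roots of unity (the symbol $a_U$ being itself invariant). Once this compatibility is verified, the remaining work is the routine handling of commutators with the cut-offs $\chi_U$ and with coordinate changes, both of which produce smoothing remainders of the type treated in \cite{AlSh}, \S 3.
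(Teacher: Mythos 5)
Your proof follows essentially the same route as the paper, which for this proposition gives almost no detail: it cites \cite{AlSh}, Props.~2.1--2.3, and relies on the paragraph preceding the statement, which says that para-differential operators on $(M,\omega)$ are defined chart by chart in canonical coordinates, so that the Euclidean continuity estimate transfers directly because the weighted Sobolev norms $H^s_\omega$ are themselves defined chart by chart. Your partition-of-unity and push-down/pull-back argument is the natural way to flesh that remark out, and the treatment of the regular charts and of the commutators with cut-offs and coordinate changes is fine.

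The one point that needs repair is your claim near a cone point that ``the symbol $a_U$ \emph{[is]} itself invariant'' under the deck group of $\pi_k$. That is what makes the push-down $(\pi_k)_\ast a_U$ single-valued and lets you invoke the Euclidean $T_{a_U}$ on the base disk, but it is false for a general $a\in L^\infty(M)$: unlike the \emph{argument} $f$, whose membership in $H^s_\omega$ forces it to be of the form $(\pi_k)^\ast F$ and hence constant along fibers of $\pi_k$, the coefficient $a$ is an arbitrary bounded function on $M$ and has no reason to take equal values at the $k+1$ points $z,\zeta z,\dots,\zeta^k z$ near $p$. As written, $(\pi_k)_\ast a_U$ is multi-valued and $T_{a_U}$ is not defined on the base disk. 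The standard fix, which should be stated as part of the definition of $T_a$ near $\Sigma$, is to use the deck-averaged symbol $\hat a := \tfrac{1}{k+1}\sum_{\zeta^{k+1}=1} a(\zeta\,\cdot\,)$: one has $\Vert\hat a\Vert_{L^\infty}\leq\Vert a\Vert_{L^\infty}$, so the bound is unaffected, and the averaged para-product is exactly the component of $T_a$ that acts on the deck-invariant subspace $H^s_\omega$. (In the paper's actual application this subtlety is moot, because the symbols $M[u]$, $S[u]$, $M[u]^{-1}$, etc.\ are constant near $\Sigma$, thanks to the hypotheses $u\vert\Sigma=u_0\vert\Sigma$ and $H=H_0$ near $\Sigma$; but for the proposition as stated for all $a\in L^\infty(M)$ the averaging step must be made explicit.)
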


Let now $C^r_\omega(M)$ denote the space of functions which belong to the Zygmund (or Lipschitz) space $C^r_\ast$ locally with respect to
canonical coordinates on $(M, \omega)$. 

\begin{proposition} [Composition of para-product operators]   \label{prop:para-comp}  If  $a, b\in C^r_\omega (M)$,  then $T_{ab} -T_a T_b$ is a bounded linear
operator from $H^s_\omega(M)$ to $H^{s+r}_\omega(M)$ , and in fact there exists a constant $C_{r,s}>0$ such that
$$
\Vert T_{ab} -T_a T_b \Vert_{\mathcal L( H^s_\omega(M), H^{s+r}_\omega(M)) } \leq C_{r,s}  \Vert a \Vert_{C^r_\omega} \Vert b \Vert_{C^r_\omega}\,.
$$
\end{proposition}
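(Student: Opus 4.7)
The plan is to reduce the statement to the corresponding Euclidean composition estimate (Bony's symbolic calculus) via the local definition of para-differential operators on $(M,\omega)$ through canonical charts, and then patch the local estimates into a global one using the weighted Sobolev structure.

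First I would fix a finite translation atlas $\mathcal U = \{(U_j, \pi_j)\}$ on $(M,\omega)$, where each $\pi_j$ is either a canonical coordinate near a regular point or a branched cover $\pi_k : U(p)\to D\subset \C$ near a cone point $p\in \Sigma$ of angle $2\pi(k+1)$, together with a subordinate partition of unity $\{\chi_j\}$ whose elements are in $H^\infty_\omega(M)$. By definition, $T_a$ is built locally in these charts from the Bony para-product associated to the pushed-forward symbols $(\pi_j)_*a$; correspondingly $H^s_\omega(M)$ and $C^r_\omega(M)$ are defined by pulling back their flat Euclidean counterparts on $\pi_j(U_j)$. Thus it suffices to prove the estimate
\[
\Vert \chi_j (T_{ab} - T_a T_b) \chi_k f \Vert_{H^{s+r}_\omega(M)} \le C_{r,s}\, |a|_{C^r_\omega} |b|_{C^r_\omega}\, \Vert f \Vert_{H^s_\omega(M)}
\]
for every pair of chart indices $(j,k)$.

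For the diagonal terms $j=k$, I transfer everything to the local chart via $\pi_j$. On a regular chart this is precisely the classical Bony composition theorem on $\R^2$: if $\tilde a, \tilde b \in C^r_\ast$ then $T_{\tilde a}T_{\tilde b} - T_{\tilde a \tilde b}$ is of order $-r$, with the sharp bound in terms of the Zygmund norms (see \cite{Me08} and the compact-manifold versions in \cite{BGdP21,Del15,Sh22}). On a cone chart, the branched cover $\pi_k$ identifies $H^s_\omega(U(p))$ with $H^s(D)$ and $C^r_\omega(U(p))$ with $C^r_\ast(D)$, and the construction of $T_a$ is by definition $(\pi_k)^*$ of the Euclidean para-product of $(\pi_k)_* a$; hence the Euclidean Bony estimate transfers verbatim. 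The off-diagonal terms $j\neq k$ are treated using Proposition~\ref{prop:para-cont} together with the standard fact that, when the supports of the symbol and the input are disjoint modulo a smoothing cutoff, the para-product is smoothing of infinite order, so these contributions are absorbed in the constant.

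The main obstacle I expect is the handling of cone points: one has to check that the localization/patching does not produce commutator terms that fail to gain the full $r$ derivatives in the weighted scale $H^{s+r}_\omega$, which is \emph{strictly smaller} than $H^{s+r}(M)$ for $s+r>1$ by the lemma just cited. The key point is that the smoothing remainders produced by Bony's calculus are smoothing in the standard Sobolev scale on $D$, hence their pullbacks land in $(\pi_j)^*(H^\infty(D))\subset H^\infty_\omega(M)$, which is precisely the right weighted class; this is what makes the estimate closeable in the $H^{s+r}_\omega$ norm rather than only in $H^{s+r}(M)$. Once this is verified, summing the localized estimates over the finite atlas with the partition of unity yields the claimed global bound with constant $C_{r,s}$ depending only on the atlas, on $r$, and on $s$.
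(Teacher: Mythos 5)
The paper does not give a proof of this proposition; it is stated with a citation to \cite{AlSh} and a brief remark that para-differential operators on translation surfaces are defined locally with respect to canonical charts, so that the Euclidean estimates transfer, first for $H^\infty_\omega$ and then by density. Your localization-and-transfer argument via a finite canonical atlas, a subordinate partition of unity, and Bony's composition theorem in each chart is precisely that implicit argument carried out in detail, including the correct key observation that near a cone point the branched cover $\pi_k$ intertwines the weighted scales with the flat Euclidean ones, so that the Bony remainders pull back into $H^{s+r}_\omega(M)$ rather than merely $H^{s+r}(M)$.
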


\begin{proposition}[Para-linearization]   \label{prop:para-lin}    Let $s>1$ and   let $N_s\in \N$
denote the smallest integer such that $N_s > 2s-1$. For any functions $u \in  H^s_\omega(M, \R^2)$ and
$F := F(x,u) \in  C^{N_s+3}_\omega(M \times \R^2 )$, the following para-linearization formula holds:
$$
F(x,u)-F(x,0)= Op({ \frac{\partial F(x,u)}{ \partial u}} ) u + \mathcal R_{PL} (F(x,·),u)u  \in H^s_\omega(M) +H^{2s-1} _\omega(M)\,, 
$$
where $\mathcal R_{PL} (F(x, \cdot),u)u$ is a bounded linear operator from $H^s_\omega(M)$ to $H^{2s-1}_\omega(M)$  such that for a constant $C'_s >0$
$$
\Vert \mathcal R_{PL} (F(x,\cdot),u) \Vert_{\mathcal L(H^{s}_\omega(M), H^{2s-1}_\omega(M) )}  \leq  C'_s  \, \Vert F \Vert_{C^{N_s+3}_\omega(M \times \R^2 )} (1+\Vert u \Vert_{ H^s_\omega(M)})  \,. 
$$
Moreover, the operators $Op({ \frac{\partial F(x,u)}{ \partial u}} ) \in  \mathcal L(H^{s}_\omega(M), H^{s}_\omega(M))$ and   $\mathcal R_{PL} (F(x,\cdot),u) \in  \mathcal L(H^{s}_\omega(M), H^{2s-1}_\omega(M))$  are continuously differentiable in $u\in H^s_\omega(M)$ with respect to the operator norms.
\end{proposition}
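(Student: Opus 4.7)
My plan is to reduce the statement to the classical Bony para-linearization formula on a Euclidean disk and then glue via the canonical atlas of $(M,\omega)$. First I would exploit that both the weighted Sobolev norms and the para-products on $(M,\omega)$ are defined chart by chart via canonical coordinates, reducing the entire identity and its operator-norm estimates to statements on charts in $\C$. For a chart around a regular point the reduction is immediate. Near a cone point $p$ of angle $2\pi(k+1)$ I would push forward through the $(k+1)$-fold branched cover $\pi_k : U(p) \to D$: the relation $\pi_k^\ast(dz) = \omega\vert U(p)$ ensures that functions in $H^s_\omega$ on $U(p)$ correspond to cyclically-invariant pullbacks of $H^s$-functions on $D$, while $X,Y$ become $\partial_x,\partial_y$. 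It then suffices to prove the formula on $D$ for $u \in H^s(D)$ with the pulled-back nonlinearity, after which a partition of unity subordinate to the translation atlas recomposes the global identity on $M$.

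\emph{Bony para-linearization on $D$.} Next I would run the classical telescoping argument. With $u = \sum_j \Delta_j u$ the Littlewood--Paley decomposition and $S_j$ its partial sum up to frequency $2^j$,
$$
F(x,u) - F(x,0) = \sum_j \bigl[ F(x, S_{j+1} u) - F(x, S_j u) \bigr].
$$
A first-order Taylor expansion in the second argument gives
$$
F(x, S_{j+1}u) - F(x, S_j u) = \partial_u F(x, S_j u)\,\Delta_j u + Q_j(x,u),
$$
with $Q_j$ quadratic in $\Delta_j u$. Summing the linear terms reproduces $Op(\partial_u F(x,u))\,u$ modulo a commutator gaining $s-1$ derivatives, while the quadratic remainder is bounded in $H^{2s-1}$ using $\Vert \Delta_j u \Vert_{L^\infty} \lesssim 2^{-j(s-1)} \vert u\vert_{H^s}$ (from the embedding $H^s \hookrightarrow C^{s-1}_\ast$ valid for $s > 1$). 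The threshold $N_s > 2s-1$ is exactly the regularity of $F$ required to absorb the Taylor remainder at a gain of $s-1$ derivatives, and the factor $\vert F\vert_{C^{N_s+3}}(1+\vert u\vert_{H^s})$ in the operator norm records the high derivatives of $F$ appearing in the remainder and the extra power of $u$ from the quadratic term.

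\emph{Differentiability in $u$ and main obstacle.} I would obtain $C^1$-dependence in $u$ by differentiating the identity formally: the linearization of $u \mapsto Op(\partial_u F(x,u))u$ brings in $Op(\partial^2_u F(x,u))$ on the variation, and the linearization of $\mathcal R_{PL}$ is treated by running the same telescoping scheme with $\partial_u F$ in place of $F$; each differentiation costs one more derivative of $F$, accounting for the $+3$ in $N_s+3$. The hard part will be the consistency of the chart-wise para-calculus at cone singularities, where $\pi_k$ fails to be a diffeomorphism. To handle this I would work throughout with cyclically invariant objects on $D$: the symbols pulled back from $M$ and the standard Euclidean Littlewood--Paley projectors both commute with the deck group of $\pi_k$, so the Euclidean operators descend to well-defined operators on $(M,\omega)$ with the mapping properties in $H^s_\omega$ claimed by the proposition.
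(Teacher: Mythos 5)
Your plan matches the paper's (very terse) treatment of Proposition~\ref{prop:para-lin}: the paper offers essentially no proof beyond citing \cite{AlSh}, Prop.~3.5, for the Euclidean para-linearization theorem and asserting that it transfers to $(M,\omega)$ by locality, because both the para-products $T_a$ and the weighted Sobolev scale $H^s_\omega$ are defined chart by chart in canonical coordinates, with the branched covers $\pi_k$ playing the role of charts near cone points. Your first paragraph spells out exactly that reduction, and your sketch of Bony's telescoping argument supplies the Euclidean input that the paper merely cites, so this is the same route with more of the underlying mechanism displayed rather than a genuinely different one.

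One internal inconsistency worth flagging, though it does not affect the soundness of the overall plan. In your last paragraph you propose to handle cone points by working with ``cyclically invariant objects on $D$'' and invoking that Littlewood--Paley projectors commute with the deck group of $\pi_k$. But the deck group of $\pi_k$ acts on $U(p)$ upstairs, not on $D$, and, more to the point, the paper's definition of $H^s_\omega$ near a cone point is precisely that $f|_{U(p)}=\pi_k^\ast F$ with $F\in H^s(D)$. The clean move is therefore the one you already made in your first paragraph: push everything down to $D$ and carry out all Littlewood--Paley and para-product analysis in the $w$-coordinate there, where there is no invariance to check, then lift the resulting identity back by $\pi_k^\ast$. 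Running Littlewood--Paley in the $z$-coordinate on $U(p)$ instead would indeed commute with the cyclic deck action and preserve the subspace of pullbacks, but it would produce estimates in the $z$-based Sobolev scale, which for $s\ge 1$ is not equivalent to $H^s_\omega$ near the cone point: for $f=\pi_k^\ast F$ one has $\partial_z f = z^k\,\pi_k^\ast(\partial_w F)$, so $z$-derivatives pick up vanishing factors that the $w$-derivatives (equivalently $X,Y$) do not. Stick with the $w$-side formulation from your first paragraph and the proposal is consistent with the paper.
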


\section{ Geodesic flow on translation surfaces}

The Hamiltonian of the flat geodesic flow on a translation surface $(M, \omega)$ has the form on $M\setminus \Sigma$ in canonical coordinates:
$$
H_0(x, \xi) = \frac{\xi_1^2 + \xi_2^2}{2}  \quad \text{ for all } (x, \xi) \in M\setminus \Sigma \times \R^2\,.
$$
The coordinate-free expression of the Hamiltonian is
$$
H_0(x, v) = \frac{1}{2}  \vert \omega_x (v ) \vert^2 \,, \quad \text{ for all }  (x,v) \in TM\,.
$$
We will consider a Hamiltonian function
$$
H(x, v) =   \frac{1}{2}  \vert \omega_x (v ) \vert^2  + f(x, v) \,, \quad \text{ for all }  (x,v) \in TM\,.
$$
with $f$ a smooth function vanishing on a neighborhood of $TM \vert \Sigma$ (in fact, it is enough to assume vanishing
at $TM \vert \Sigma$  with sufficiently high order).

The tangent bundle $TM \vert \Sigma$ can be trivialized over $M\setminus \Sigma$ since the bundle has never vanishing sections
$X$ and $Y$, hence 
$$
v = \xi_1 X_1+ \xi_2 X_2 \,,
$$
and in the same coordinates
$$
H(x, \xi) = \frac{\xi_1^2 + \xi_2^2}{2}  + f(x, \xi)\,.
$$
The Hamiltonian vector field $X_H$ has the form
$$
\begin{aligned}
X_H (x,\xi)  &=  \frac{\partial H}{\partial \xi_1} X_1 +  \frac{\partial H}{\partial \xi_2} X_2 -  X_1 H \frac{\partial}{\partial \xi_1}
 -  X_2H  \frac{\partial}{\partial \xi_2} \\
&=  
\xi_1 X_1 + \xi_2 X_2 +  \frac{\partial f}{\partial \xi_1}(x,\xi) X_1 + \frac{\partial f}{\partial \xi_2}(x,\xi) X_2  \\ 
    &\quad \quad  -  X_1 f (x,\xi) \frac{\partial}{\partial \xi_1} -  X_2 f (x,\xi) \frac{\partial}{\partial \xi_2}\,. 
\end{aligned}
$$

The equation of an invariant surface is of the form 
\begin{equation}
\label{eq:inv_surf}
\mathcal{F}_\xi(H, u) := X_H \circ u - X_\xi (u) =0 \,.
\end{equation}
with $u: M \to M\times \R^2$, so that the invariant surface is $u(M) \subset M\times \R^2$.

We proceed to a (standard) computation of the differential $D_u \mathcal{F}_\xi(H, u)$.
Let 
$$
A[u] = \begin{pmatrix} D_X \nabla_\xi H (u)  &  D_\xi \nabla_\xi H (u)\\ -  D_X \nabla_X H (u)  & - D_\xi \nabla_X H (u) \end{pmatrix}  \in M_{4\times 4} (\R)\,.
$$
so that
$$
D_u \mathcal{F}_\xi(H, u) (v) = A[u] (v) - X_\xi (v) \,.
$$
Following \cite{AlSh}, \cite{LGJV05}, we introduce
$$
N[u] =  (Du^t \cdot Du)^{-1} \in M_{2\times 2} (\R) \quad \text{ and } \quad   M[u] = \begin{pmatrix}  Du &  (JDu) \cdot N[u] \end{pmatrix} \in M_{4\times 4}(\R) \,.
$$
In the above formulas $Du$ denotes the differential of $u = (u_1, u_2): M \to M \times \R^2$, with respect to the bases $\{X_1, X_2\}$ of $TM$ and $\{ \partial / \partial \xi_1,
\partial / \partial \xi_2\}$,   as a column vector:
$$
Du = \begin{pmatrix}    Du_1   \\   D u_2 \end{pmatrix}   \in  M_{4, 2} (\R)\,, \quad JDu= \begin{pmatrix} 0 & I_2 \\-I_2 & 0 \end{pmatrix}  \begin{pmatrix}    Du_1   \\   D u_2 \end{pmatrix}
=  \begin{pmatrix}    Du_2   \\   -D u_1 \end{pmatrix} \in  M_{4, 2} (\R)\,.
$$
Since $u$ is by hypotheses close to $u_0$, defined as $u_0(x)= (x, \xi)$  for $(x, \xi) \in M \times \R^2$, it follows that $N[u]$ is close to the identity, and $M[u]$, $M[u]^{-1}$
are close to $\text{diag}(I_2, -I_2)$  (with an error uniformly bounded in terms of the  uniform  norm of $Du$).

Since the translation structure on $M$ (of genus $g\geq 2$) has cone points at a finite set $\Sigma$, the space of smooth maps $u : M \to M \times \R^2$ is not locally a vector
space. However, the finite codimensional  subspace determined by the condition that the restriction
$$
u_1 \vert \Sigma = u_{0,1} \vert \Sigma = \text{Id}_\Sigma \,
$$
can be locally identified with a  ball  (with sufficiently small radius) of functions with values in $\R^2 \times \R^2$ which vanish at $\Sigma$. Indeed, for such 
functions we may have (in terms of the flat distance $d_\omega$ on the translation surface $(M,\omega)$):
$$
\text{d}_\omega ( u(x), x ) <  \text{d}_\omega ( x, \Sigma)\,, \quad \text{ for all } x \in M\,.
$$
The  linearization of the equation \eqref{eq:inv_surf} is a cohomological equation for the translation vector field  $X_\xi$. Cohomological equations
on translation surfaces were investigated in \cite{F97}, \cite{F21},  and \cite{MMY05}, \cite{MY16} for Interval Exchange Transformations (IET's), which appear 
as return maps of translation flows to transverse intervals. All of the above paper, except~\cite{F97} hold for almost all translation surfaces, in fact under a precise 
Roth-type full measure condition on the IET.  It was proved in \cite{CE15} on the basis of the Magic Wand Theorem of Eskin, Mirzakhani and Mohammadi  \cite{EM18},
\cite{EMM15} and  subsequent results of S.~Filip \cite{Fil16}, that the Roth-type condition in fact holds for every translation surface in almost all directions.

We state below the simplest form of such results, going back to \cite{F97}. 

For all $s\geq 0$, and for almost all $\xi \in \R^2$ let $\mathcal  I^s_\xi (M) \subset H^{-s}_\omega (M)$ denote the space of invariant distributions for the vector field $X_\xi$ on $M$,
that is,
$$
\mathcal  I^s_\xi (M) := \{ D \in H^{-s}_\omega (M) \vert X_\xi D =0 \}\,.
$$
\begin{theorem} \cite{F97}  \label{thm:CE} There exists $s_0 >0$ such that, for all $s >s_0$ and for almost all $\xi \in \R^2$  the cohomological
equation $X_\xi u =f$ has a solution under the following conditions:
\begin{itemize}
\item  there exists a constant $C_{s}(\xi) >0$ such that  if $f \in H^s_\omega(M)$  has zero average with respect to the area form on $(M, \omega)$ then there exists a distributional solution $u \in H^{-s_0}_\omega (M)$ such that
$$
\Vert u \Vert_{H^{-s_0}_\omega(M)} \leq C_s(\xi)    \Vert f \Vert _{H^{s}_\omega(M)} \,;
$$
\item for all $0\leq t <s-s_0$ and there exists a constant $C_{s,t}(\xi) >0$ such that if $f \in H^s_\omega(M)$ belongs to the kernel $\text{Ker} \Big(\mathcal  I^s_\xi (M) \Big)$ of the 
space of invariant distributions, which is finite dimensional, then there exists a  unique zero-average solution $u\in H^t_\omega(M)$  and the following estimate holds:
$$
\Vert u \Vert_{H^{t}_\omega(M)} \leq C_{s,t} (\xi)    \Vert f \Vert _{H^{s}_\omega(M)} \,;
$$
\end{itemize} 
\end{theorem}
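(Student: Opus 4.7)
The plan is to prove Theorem \ref{thm:CE} by combining harmonic analysis on the translation surface (using the Cauchy--Riemann operators associated with $\omega$ and the spectral decomposition of the Friedrichs Laplacian) with Teichm\"uller renormalization. First, after rotating the abelian differential (replacing $\omega$ by $e^{-i\theta}\omega$ for an appropriate $\theta$), we may assume $\xi=(1,0)$ and reduce to solving $Xu=f$, where $X$ is the horizontal vector field of Section~2.

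Next, introduce the Cauchy--Riemann operators $\partial_\omega^{\pm}=\tfrac12(X\mp iY)$, so that $X=\partial_\omega^+ + \partial_\omega^-$ and $-\Delta_F = 4\,\partial_\omega^-\partial_\omega^+$ on $H^\infty_\omega(M)$. Expand $f\in H^s_\omega(M)$ with respect to the orthonormal basis $\{e_n\}$ of eigenfunctions of $-\Delta_F$. On each eigenspace one can write down a formal solution using the (distributional) commutation with $\partial_\omega^\pm$, and the only obstructions to the existence of an $L^2$ solution come from distributions $\mathcal D$ on $M$ satisfying $X\mathcal D=0$, i.e.\ elements of $\mathcal I^s_\xi(M)$. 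A Fredholm-type argument (cf.\ \cite{F97}) shows $\dim \mathcal I^s_\xi(M)$ is finite and grows linearly in $s$, bounded in terms of the genus and $\#\Sigma$.

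The quantitative estimates come from Teichm\"uller renormalization. The flow $g_t=\mathrm{diag}(e^t,e^{-t})$ on the moduli space of abelian differentials rescales $X$ by $e^{-t}$ and transforms the weighted Sobolev norms in a controlled way. For almost every $\xi$ the orbit $g_t\cdot(M,\omega_\xi)$ is Oseledets-generic for the Kontsevich--Zorich cocycle and returns exponentially often to a compact set of moduli space (by the results of \cite{CE15}, \cite{EM18}, \cite{EMM15}, \cite{Fil16}). A telescoping of the renormalized cohomological equation over such return times, using at each step the finite loss of derivatives provided by the local eigenfunction analysis, produces the solution $u$ with $|u|_{H^{-s_0}_\omega(M)}\le C_s(\xi)\,|f|_{H^s_\omega(M)}$ in the zero-mean case, and the full range of estimates for $f$ annihilating $\mathcal I^s_\xi(M)$ by an interpolation argument between the $H^{-s_0}_\omega$ bound and the a priori loss given by iterating the renormalization a fixed number of times to gain $t$ derivatives.

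The main obstacle, and the reason the sharp bound of \cite{F21} is significantly harder than the original argument of \cite{F97}, is the delicate interplay between the behavior of $X,Y$ near the cone points of $\Sigma$ (where the weighted Sobolev spaces and the Friedrichs Laplacian differ from their standard counterparts, cf.\ Lemma 2.1) and the global Teichm\"uller contraction. Controlling the loss of derivatives independently of how close iterates of the Teichm\"uller flow bring the horocycle foliation to the saddle connections joining points of $\Sigma$, and bounding the norms of the invariant distributions uniformly as $s$ grows, is the technical heart of the argument.
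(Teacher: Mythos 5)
The paper does not prove this statement: it is quoted from \cite{F97}, with the ``almost all $\xi$'' formulation resting on the later results \cite{CE15}, \cite{EM18}, \cite{EMM15}, \cite{Fil16} that are discussed in the paragraph immediately preceding the theorem. So what has to be assessed is whether your sketch reconstructs the argument of the cited reference.

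Your opening matches the framework of \cite{F97}: reduce to the horizontal field $X$ by rotating $\omega$, introduce the Cauchy--Riemann operators $\partial_\omega^\pm=\tfrac12(X\mp iY)$, and work with the spectral theory of the Friedrichs Laplacian. (A small sign slip: since $[X,Y]=0$ away from $\Sigma$, one has $4\,\partial_\omega^+\partial_\omega^-=X^2+Y^2=\Delta$, so $-\Delta_F=-4\,\partial_\omega^+\partial_\omega^-$, not $+4\,\partial_\omega^-\partial_\omega^+$.)

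Where your sketch genuinely departs from \cite{F97} is the third paragraph, where you derive the quantitative estimates from a Teichm\"uller renormalization: telescoping along exponential return times of the $g_t$-orbit to a compact part of moduli space and invoking Oseledets genericity of the Kontsevich--Zorich cocycle. That is the machinery of \cite{MMY05} and \cite{F21}, which the introduction of this paper explicitly characterizes as ``a dynamical approach completely different from the harmonic analysis methods of \cite{F97}'' --- it is precisely what allows \cite{MMY05} to get a loss of $1+BV$ derivatives and \cite{F21} to get $3+$, both sharper than \cite{F97}. In \cite{F97} the $H^{-s_0}$ bound for zero-average data, the identification of the finite-dimensional obstruction space $\mathcal I^s_\xi(M)$, and the higher-regularity estimates when $f\in\mathrm{Ker}\,\mathcal I^s_\xi(M)$ are all obtained by direct harmonic-analytic means (Dirichlet form, Cauchy--Riemann currents, interpolation), and the exceptional set of directions is controlled by a Fubini/Borel--Cantelli argument over $\theta\in S^1$, not by Oseledets genericity; the Kontsevich--Zorich/\cite{CE15} input only enters later, to upgrade ``a.e.\ surface'' to ``fixed surface, a.e.\ direction.'' So your outline grafts the renormalization technology of the later papers onto the harmonic-analysis skeleton of \cite{F97}. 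That hybrid would indeed yield the statement (and more), but it is not the route the cited reference takes, and the paper's own introduction goes out of its way to distinguish the two methods.
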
 

It is then immediate to derive the existence of solutions vanishing (at any finite order) at $\Sigma$ under a finite number of additional 
independent distributional  conditions.

\begin{corollary} 
\label{cor:CE}
For any $k\in \N$, there exists $s_k >0$ such that, for all $s >s_k$ and for almost all $\xi \in \R^2$,  there exists a finite dimensional
space $\mathcal  I^{s,k}_\xi (M) \subset H^{-s}_\omega(M)$ such that  the cohomological equation $X_\xi u =f$ has a solution vanishing at order $k$
on the finite set $\Sigma$ under the condition that $f \in \text{Ker}\Big( \mathcal  I^{s,k}_\xi (M) \Big)  \subset H^{s}_\omega(M)$\,.
\end{corollary}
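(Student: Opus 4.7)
The plan is to augment the finite-dimensional obstruction space $\mathcal I^s_\xi(M)$ of Theorem \ref{thm:CE} by a finite collection of point-evaluation distributions that enforce vanishing of the derivatives $X^\alpha Y^\beta u$ up to order $k$ at each cone point. Together with Theorem \ref{thm:CE} this will immediately yield the Corollary by linearity.

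First I would fix $s_k > s_0$ large enough that any $s > s_k$ admits some exponent $t$ with $t < s - s_0$ and $t > k+1$; this is required so that the weighted Sobolev embedding controls pointwise values of derivatives of order $\leq k$ at points of $\Sigma$. More precisely, near a cone point $p$ of angle $2\pi(k_p+1)$ a function $u \in H^t_\omega(M)$ lifts along the branched cover $\pi_{k_p}$ to a function $U \in H^t$ on a Euclidean disk $D$, and by the identities $(\pi_{k_p})_\ast X = \partial/\partial x$, $(\pi_{k_p})_\ast Y = \partial/\partial y$ one has $X^\alpha Y^\beta u(p) = \partial_x^\alpha \partial_y^\beta U(0)$. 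The standard Sobolev embedding $H^t(D) \hookrightarrow C^k(D)$ for $t > k+1$ then gives the bound $|X^\alpha Y^\beta u(p)| \leq C \,\vert u \vert_{H^t_\omega(M)}$ for all $\alpha + \beta \leq k$.

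Second, for each $p \in \Sigma$ and each multi-index $(\alpha,\beta)$ with $\alpha+\beta \leq k$, I would define the linear functional
$$
\mathcal D^k_{p,\alpha,\beta}(f) \defin X^\alpha Y^\beta u(p)\,,
$$
where, given $f \in \text{Ker}(\mathcal I^s_\xi(M)) \cap H^s_\omega(M)$, the function $u \in H^t_\omega(M)$ is the unique zero-average solution produced by the second bullet of Theorem \ref{thm:CE}. Composing the estimate $\vert u \vert_{H^t_\omega(M)} \leq C_{s,t}(\xi)\,\vert f \vert_{H^s_\omega(M)}$ with the embedding bound above shows that $\mathcal D^k_{p,\alpha,\beta}$ is continuous on $\text{Ker}(\mathcal I^s_\xi(M)) \cap H^s_\omega(M)$, and it extends (by any bounded projection onto an algebraic complement of $\text{Ker}(\mathcal I^s_\xi(M))$ in $H^s_\omega(M)$, which exists because $\mathcal I^s_\xi(M)$ is finite-dimensional) to an element of $H^{-s}_\omega(M)$. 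I would then set
$$
\mathcal I^{s,k}_\xi(M) \defin \mathcal I^s_\xi(M) + \mathrm{span}\bigl\{\mathcal D^k_{p,\alpha,\beta} \mid p\in \Sigma,\ \alpha+\beta \leq k\bigr\} \subset H^{-s}_\omega(M)\,,
$$
which is finite-dimensional with $\dim \mathcal I^{s,k}_\xi(M) \leq \dim \mathcal I^s_\xi(M) + |\Sigma| \cdot \binom{k+2}{2}$.

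Then if $f \in \text{Ker}\bigl(\mathcal I^{s,k}_\xi(M)\bigr) \subset H^s_\omega(M)$, in particular $f \in \text{Ker}(\mathcal I^s_\xi(M))$, so Theorem \ref{thm:CE} furnishes the solution $u \in H^t_\omega(M)$; and by construction $X^\alpha Y^\beta u(p) = \mathcal D^k_{p,\alpha,\beta}(f) = 0$ for every $p \in \Sigma$ and $\alpha+\beta \leq k$, i.e.\ $u$ vanishes to order $k$ on $\Sigma$. The only genuinely delicate point is the calibration of $s_k$: one must respect both the loss-of-derivatives $s_0$ inherent in Theorem \ref{thm:CE} and the dimension-two Sobolev threshold for pointwise control of $k$-th order derivatives, so the natural choice is $s_k \defin s_0 + k + 2$ (or any value strictly greater than $s_0 + k + 1$).
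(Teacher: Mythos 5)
Your proposal is correct and follows essentially the same approach as the paper: you augment the obstruction space $\mathcal I^s_\xi(M)$ by the functionals $f \mapsto X^\alpha Y^\beta u(p)$, which are precisely the paper's compositions $\delta_\Sigma^{(j)} \circ G^s_\xi$ of derivatives of Dirac masses with the Green operator, well-defined by the same Sobolev-embedding threshold ($s_k - s_0 > k+1$). The only difference is that you spell out the lift to a Euclidean disk via the branched cover and the extension to $H^{-s}_\omega(M)$ by a bounded projection, steps the paper leaves implicit.
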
 
\begin{proof} By Theorem \ref{thm:CE}, for all $s>s_0$ and for almost all $\xi\in \R^2$,  there exists a Green operator $G^s_\xi : \text{Ker}\Big( \mathcal  I^{s}_\xi (M) \Big)  \to
H^t_\omega(M)$ for $t<s-s_0$  (with values in the subspace of zero average functions). The condition of vanishing at $\Sigma$ at order $k$ is
given by a finite number of distributions supported at $\Sigma$, which are derivatives of Dirac masses at $\Sigma$, with Sobolev order up to 
$k+1$ (by the Sobolev embedding theorem). Let us assume then that $s_k> s_0 + k +1$. By Theorem  \ref{thm:CE} the compositions $\delta_\Sigma^{(j)} \circ G_\xi^s$,
of derivatives $\delta_\Sigma^{(j)}$ of order $j\leq k$ of Dirac masses at $\Sigma$ with the Green operator $G^s_\xi$, give well-defined distributions on 
$H^{s_k}_\omega(M)$, since $s_k -s_0  > k+1$. 

By definition, if $f\in H^s_\omega(M)$ with $s>s_k$ belongs to 
the kernel of $\mathcal  I^{s}_\xi (M)$ and to that of all the additional distributions $\delta_\Sigma^{(j)} \circ G_\xi^s$, then the unique zero-average solution
$u\in H^t_\omega(M)$ of the equation $X_\xi u=f$ vanishes at order $k$ on $\Sigma$.

\end{proof} 

\smallskip
\noindent It is well known that invariant submanifolds of Hamiltonian flows should be expected to be Lagrangian.  In our context
for any differentiable map $u: M \to \R^2$,  the function
$$
L[u]=  (Du)^t J Du \,,
$$
measures how for $u$ is from being a Lagrangian embedding.   In fact $L[u]=0$ if and only if the differential of the pull-back  under 
$u$ of the standard symplectic form on $M \times \R^2$ is a closed $2$-form. In addition $L[u] =0$ whenever
$\mathcal{F}_\xi(H, u)=0$ ($u$ is an invariant section) and the Sobolev norms of $L[u]$ can be bounded in terms of those of $\mathcal{F}_\xi(H, u)=0$,
as in the following result.

\begin{lemma} (see \cite[Lemma 19]{LGJV05}) \label{lemma:L} If the translation flow in the direction $\xi \in \R^2\setminus \{0\}$ is (quasi)-minimal  on $M$, 
then the function $L[u]$ depends linearly on $\mathcal{F}_\xi(H, u)$ for all continuously differentiable $u:M \to \R^2$.
In fact, we have the formula
$$
X_\xi L[u]  = - \Big[D \mathcal{F}_\xi(H, u)^t J Du + (Du)^t J D\mathcal{F}_\xi(H, u) \Big] \,.
$$
As a consequence, for almost all $\xi \in \R^2$, for every $s>s_0$  and any $t<s-s_0$ there exists $C_{s,t}(\xi) >0$ such that, 
whenever $u \in H^{s+2}_\omega (M)$, we have 
$$
\Vert L[u] \Vert _{H^t_\omega(M)}  \leq   C_{s,t}(\xi)  \Vert Du \Vert_{H^s_\omega(M)} \Vert D\mathcal{F}_\xi(H, u)  \Vert_{H^s_\omega(M)} \,.
$$
\end{lemma} 
\begin{proof} 
Since by definition $\mathcal{F}_\xi(H, u)  = X_H \circ u - X_\xi u$  and $D (X_H \circ u) = A[u] Du$, by the Hamiltonian character of the equations
$ A[u]^t J  + J A[u] =0$ we can compute as follows:
$$
\begin{aligned}
X_\xi L[u] &= (D X_\xi u)^t J Du +  (Du)^t J D X_\xi u  \\ & =  \Big[D(X_H\circ u - \mathcal{F}_\xi(H, u) )\Big]^t J Du + 
(Du)^t J D \Big [ (X_H \circ u -\mathcal{F}_\xi(H, u)\Big] 
 \\ & =   (Du)^t  \Big ( A[u]^t J + J A[u] \Big) Du -  D \mathcal{F}_\xi(H, u)^t J Du + (Du)^t J D\mathcal{F}_\xi(H, u) \Big]
 \\& = - \Big[D \mathcal{F}_\xi(H, u)^t J Du + (Du)^t J D\mathcal{F}_\xi(H, u) \Big]     \,.
\end{aligned}
$$
Since $L[u]$ has zero average on $M$ (by its definition and integration by parts), the estimates on the Sobolev norms of $L[u]$ follow from the above equation and Theorem~\ref{thm:CE}. In fact, the second part
of  the theorem is equivalent to the following a priori estimate: for $s>s_0$ and $t<s-s_0$ there exists $C_{s,t}(\xi)>0$ such that, for all $v \in H^{s+1}_\omega(M)$ of zero average,
\begin{equation}
\label{eq:a_priori_est}
\Vert  v \Vert_{H^t_\omega(M)} \leq C_{s,t}(\xi) \Vert X_\xi v \Vert_{H^s_\omega(M)}.
\end{equation}
The above a priori estimate can be derived from Theorem~\ref{thm:CE} as follows. For any $v \in H^\infty_\omega (M) = \cap_{s>0} H^s_\omega(M)$,
the function $X_\xi v \in \text{ker}(\mathcal I^s_\xi(M))$ for all $s >0$.  From Theorem \ref{thm:CE} it follows the the a priori estimate  \eqref{eq:a_priori_est}
holds. The a priori estimate can then be extended by continuity, since $H^\infty_\omega (M)$ is dense in $H^s_\omega(M)$ for all $s>0$.

\end{proof}

\section{Para-linearization}

We proceed to compute the para-linearization of the non-linear equation of an invariant surface.

\begin{lemma} 
\label{lemma:lin_id} 
(see  \cite{LGJV05}, Lemma 20) The following identities for the linearization of $\mathcal{F}_\xi(H, u)$ hold:
$$
D_u \mathcal{F}_\xi(H, u) (M[u] v)  =  M[u] \begin{pmatrix} 0_2 & S[u] \\ 0_2 & 0_2 \end{pmatrix} v - M[u] X_\xi v + B[u,L[u],\mathcal{F}_\xi(H, u)] v\,. 
$$
In the above formula we have
$$
\begin{aligned}
S[u]:=  (I_2 + (N[u] L[u])^2)^{-1} N[u] \cdot (Du)^t  \cdot [A[u], J] \cdot Du \cdot  N[u]  \in M_{2\times 2}(\R) \,.
\end{aligned}
$$
Finally, the following crucial property holds:  the term $B[u, L[u], \mathcal{F}_\xi(H, u)] $ is rational in $L[u]$, linear in  $\mathcal{F}_\xi(H, u)$, 
and vanishes for $ L[u] = \mathcal{F}_\xi(H, u)=0$, in fact 
$$
B [u, L[u], \mathcal{F}_\xi(H, u)]  = \begin{pmatrix} B_1[\mathcal{F}_\xi(H, u)]   & B_2[u, L[u], \mathcal{F}_\xi(H, u)]  \end{pmatrix} 
$$
where for any mapping $L, E : M \to M_{2\times 2}(\R)$ we have 
$$
\begin{aligned}
B_1[E] &:= DE \,, \\  
B_2[u,L, E]  &:= (Du) (I_2 + (N[u] L)^2)^{-1} \\ & \qquad \times \Big\{N[u]L N[u]  [DE^t  \cdot Du \cdot  N[u]    + N[u] (Du)^t J DE  \cdot N[u]  - N[u]L X_\xi N[u]  \Big\}  \\ &+ J (Du) N[u]  (I_2 + (L N[u])^2)^{-1} \\
&   \qquad \times \Big\{ -DE^t  \cdot Du \cdot  N[u]  + L N[u] (Du)^t  [A[u], J]  Du \cdot  N[u]  \\ & \qquad \qquad+ L N[u] (Du)^t J D\mathcal{F}_\xi(H, u)  \cdot N[u]    - LN[u]L X_\xi N[u]  \Big \} \,.
\end{aligned} 
$$
We may then rewrite the above relation in the equivalent form
$$
D_u \mathcal{F}_\xi(H, u) (v)  =  M[u] \begin{pmatrix} 0_2 & S[u] \\ 0_2 & 0_2 \end{pmatrix} M[u]^{-1} v - M[u] X_\xi (M[u]^{-1} v) + B[u,L[u], \mathcal{F}_\xi(H, u)    ] M[u]^{-1}v\,. 
$$
\end{lemma}

\begin{proof}
Let $\mathcal{F}_\xi(H, u) := X_H \circ u - X_\xi (u)$. It follows that
$$
D_u \mathcal{F}_\xi(H, u) ( M[u] v) = A[u] (M[u] v) - (X_\xi M[u] ) v -  M[u] (X_\xi v)\,.
$$
We compute the matrix $A[u] M[u] - X_\xi M[u]$.  We compute the first two columns.  Since for $M[u]$ they are given by $Du$ and we have
$$
D\mathcal{F}_\xi(H, u) := A[u] Du - X_\xi (Du)\,,
$$
it follows that the first two columns of $A[u] M[u] - X_\xi M[u]$ are equal to $D\mathcal{F}_\xi(H, u)$, hence $B_1[E]= DE$ as stated.

\medskip
We then compute the last two columns. The last two columns for $M[u]$ are $(JDu) \cdot N[u]$. We therefore compute
$$
A[u]  \cdot J Du \cdot  N[u] -  X_\xi (JDu \cdot N[u])\,.
$$ 
Since $Du$ and $J Du$ form a basis of $\R^2$ and $N[u]$ is invertible (for $u$ near $u_0$) there exist matrices
$P$ and $Q \in M_{2\times 2}(\R)$ such that we can write
$$
A[u]  \cdot J Du \cdot  N[u] -  X_\xi (JDu \cdot N[u]) =  (Du) P  + J (Du) N[u]  Q \,.
$$
Since $ (Du)^t  J Du = L[u]$, $(Du)^t (Du) N =I_2$,  $J^2 =-I_4$ and $ J A[u] = - A[u]^t J$, we have
$$
\begin{aligned}
-L[u] P + Q &= -Du^t  J \Big( A[u]  \cdot J Du \cdot  N[u] -  X_\xi (J Du \cdot N[u]) \Big) \\
&=  - Du^t   (A[u])^t  Du \cdot  N[u]  - Du^t  X_\xi (Du \cdot N[u]) \\
&=  - \Big(  X_\xi (Du)^t + [D\mathcal{F}_\xi(H, u)]^t \Big) \cdot Du \cdot  N[u]  - (Du)^t  X_\xi (Du \cdot N[u]) \\
& = - X_\xi  \Big( (Du)^t  \cdot Du \cdot  N[u] \Big) -   [D\mathcal{F}_\xi(H, u)]^t  \cdot Du \cdot  N[u] \\
&=   - [D\mathcal{F}_\xi(H, u)]^t  \cdot Du \cdot  N[u]\,.
\end{aligned}
$$
We also have, since $ (Du)^t  J Du =L[u]$,
$$
\begin{aligned}
P + &N[u] L[u] N[u] Q=  N[u] (Du)^t \Big( A[u]  \cdot J Du \cdot  N[u] -  X_\xi (J Du \cdot N[u]) \Big) \\
&=  N[u] (Du)^t \Big( A[u]  \cdot J Du \cdot  N[u] -  X_\xi (J Du) \cdot N[u] \Big) - N[u]L[u] X_\xi N[u] \\
&=  N[u] (Du)^t \Big( A[u]  \cdot J Du \cdot  N[u] -     J \Big( A[u](Du)-  D\mathcal{F}_\xi(H, u) \Big)    \cdot N[u] \Big) - N[u]L[u] X_\xi N[u] \\
&= N[u] (Du)^t  [A[u], J]  Du \cdot  N[u]  + N[u] (Du)^t J D\mathcal{F}_\xi(H, u)  \cdot N[u]  - N[u]L[u] X_\xi N[u] \,.
\end{aligned}
$$
Thus we have the formulas
$$
\begin{cases} P = & (I_2 + (N[u] L[u])^2)^{-1}\Big\{N[u]L[u] N[u]  [D\mathcal{F}_\xi(H, u)]^t  \cdot Du \cdot  N[u]   \\ & + N[u] (Du)^t  [A[u], J]  Du \cdot  N[u]  + N[u] (Du)^t J D\mathcal{F}_\xi(H, u)  \cdot N[u]  - N[u]L[u] X_\xi N[u]  \Big\} \,;   \\
 Q =  & (I_2 + (L[u] N[u])^2)^{-1}\Big\{-D\mathcal{F}_\xi(H, u)^t  \cdot Du \cdot  N[u]  + L[u] N[u] (Du)^t  [A[u], J]  Du \cdot  N[u]    \\ &  \quad + L[u] N[u] (Du)^t J D\mathcal{F}_\xi(H, u)  \cdot N[u]   - L[u] N[u]L[u] X_\xi N[u]  \Big\} \,.
\end{cases}  
$$
We conclude that  the stated identity holds with
$$
\begin{aligned} &S[u]:=  (I_2 + (N[u] L[u])^2)^{-1} N[u] \cdot (Du)^t  \cdot [A[u], J] \cdot Du \cdot  N[u] \} \,,  \\
                        &B_2[u]  := (Du) (I_2 + (N[u] L[u])^2)^{-1}\Big\{N[u]L[u] N[u]  [D\mathcal{F}_\xi(H, u)]^t  \cdot Du \cdot  N[u]    \\ & \qquad \qquad  + N[u] (Du)^t J D\mathcal{F}_\xi(H, u)  \cdot N[u]  - N[u]L[u] X_\xi N[u]  \Big\}  \\ &+ J (Du) N[u]  (I_2 + (L[u] N[u])^2)^{-1} \Big\{ -D\mathcal{F}_\xi(H, u)^t  \cdot Du \cdot  N[u]  \\ &  + L[u] N[u] (Du)^t  [A[u], J]  Du \cdot  N[u] + L[u] N[u] (Du)^t J D\mathcal{F}_\xi(H, u)  \cdot N[u]    - L[u]N[u]L[u] X_\xi N[u]  \Big \} \,.
\end{aligned} 
$$
\end{proof} 

The para-linearization formula of Proposition \ref{prop:para-lin} gives
$$
\mathcal{F}_\xi(H, u)  = \mathcal{F}_\xi(H, u_0) + T_{D_u \mathcal{F}_\xi(H, u)} (u-u_0) + \mathcal R_{PL} (\mathcal{F}_\xi(H, u) ,  u-u_0) (u-u_0)\,.
$$
 Let then
$$
E := \mathcal{F}_\xi(H, u) \,.
$$
By the above lemma we have
$$
\begin{aligned}
&T_{D_u \mathcal{F}_\xi(H, u)} (u-u_0)  =  Op \Big ( M[u] \begin{pmatrix} 0_2 & S[u] \\ 0_2 & 0_2 \end{pmatrix} M[u]^{-1} \Big) (u-u_0) \\ &   - 
T_{ M[u] } X_\xi \Big(T_{M[u]^{-1} }  (u-u_0) \Big)  + Op \Big( B[u,L, E] (M[u])^{-1} \Big) (u-u_0) + \mathcal R'_{CM}[u] (u-u_0) \,, 
\end{aligned}
$$
hence the para-linearization formula can be written as follows:
$$
\begin{aligned}
E&= \mathcal{F}_\xi(H, u_0)   + T_{M[u]}  \begin{pmatrix} 0_2 & T_{S[u]} \\ 0_2 & 0_2 \end{pmatrix} T_{M[u]^{-1}}  (u-u_0) \\ & \quad  - 
T_{ M[u] } X_\xi \Big(T_{M[u]^{-1} }  (u-u_0) \Big) + Op \Big( B [u,E] (M[u])^{-1} \Big) (u-u_0)  \\ &
\quad + 
\mathcal R_{PL}[E, u-u_0 ] (u-u_0) + \mathcal R_{CM}[u] (u-u_0) \,.
\end{aligned} 
$$
The above formula leads to the para-differential (co)homological equation
$$
\begin{aligned}
&  T_{M[u]} \begin{pmatrix} 0_2 & T_{S[u]} \\ 0_2 & 0_2 \end{pmatrix} T_{M[u]^{-1}}  (u-u_0)  - 
T_{ M[u] } X_\xi \Big(T_{M[u]^{-1} }  (u-u_0) \Big)   \\ & \quad    
 =   -\mathcal{F}_\xi(H, u_0) -\mathcal R_{PL}[E, u-u_0 ] (u-u_0) - \mathcal R_{CM}[u] (u-u_0)    \,.
\end{aligned}
$$

\section{Solution of the para-differential cohomological equation}

We write the cohomological equation with counter-terms. Let $\{\chi_i\}$ be a dual basis of the space of invariant distributions for the vector field $X_\xi$. 
Let $[\chi_i]$ denote the $4\times 2$ matrix with entries all equal to $\chi_i$ and $c_i$ a constant diagonal $4\times 4$ matrix.  Let  $\mathcal T[u]$
denote a bounded linear operator (to be determined).
\begin{equation}
\label{eq:para-cohom}
\begin{aligned}
&  T_{M[u]} \begin{pmatrix} 0_2 & T_{S[u]} \\ 0_2 & 0_2 \end{pmatrix} T_{M[u]^{-1}}  (u-u_0)  - 
T_{ M[u] } X_\xi \Big(T_{M[u]^{-1} }  (u-u_0) \Big)  + \mathcal T[u] \Big(  \sum_i  c_i [\chi_i]  \Big)  \\ &    
    \qquad = -\mathcal{F}_\xi(H, u_0) -\mathcal R_{PL}[E, u-u_0 ] (u-u_0) - \mathcal R_{CM}[u] (u-u_0)    \,.
\end{aligned}
\end{equation}

At this point we prove existence of solutions (that is, invertibility of the operator) for the equation
$$
 T_{M[u]} \begin{pmatrix} 0_2 & T_{S[u]} \\ 0_2 & 0_2 \end{pmatrix} T_{M[u]^{-1}}  v - 
T_{ M[u] } X_\xi \Big(T_{M[u]^{-1} }  v \Big)  + \mathcal T[u] \Big(  \sum_i  c_i [\chi_i]  \Big) = f
$$
Since $M[u]= \text{diag}(I_2, -I_2) + O( \Vert u -u_0 \Vert_{C^1_\omega(M)})$, it is an invertible matrix for $\Vert u -u_0 \Vert_{C^1_\omega(M)}$ small enough. 
Under this hypothesis we can write the equation in the form
$$
 \begin{pmatrix} 0_2 & T_{S[u]} \\ 0_2 & 0_2 \end{pmatrix} T_{M[u]^{-1}}  v - 
 X_\xi \Big(T_{M[u]^{-1} }  v \Big)  +  T_{M[u]}^{-1} \mathcal T[u] \Big(  \sum_i  c_i [\chi_i]  \Big) =  T_{M[u]}^{-1} f
$$
hence with the choice of the linear operator $\mathcal T[u] = T_{M[u]}$ we have the equation
$$
\begin{pmatrix} 0_2 & T_{S[u]} \\ 0_2 & 0_2 \end{pmatrix} T_{M[u]^{-1}}  v - 
 X_\xi \Big(T_{M[u]^{-1} }  v \Big)  +   \sum_i  c_i [\chi_i]   =  T_{M[u]}^{-1} f
$$
The existence (with bounds) of solution to the above equation  is proved below.

\begin{lemma}  
\label{lemma:solutions}
Let $\xi \in \R^2 \setminus \{0\}$ be such that the translation vector field $X_\xi$ on the translation surface $(M, \omega)$ is stable
(that is, the Lie derivative operator has close range) of finite codimension in Sobolev spaces of finitely differentiable functions, with loss of $\sigma>0$
derivatives in the scale $\{H^s_\omega(M) \}$ of weighted Sobolev spaces on $(M, \omega)$.
There are constants $\rho_1$, $\rho_2>0$ depending on $\Vert H \Vert_{C^3_\omega(M)}$  and a constant $K>0$ with the following property. If  $\Vert \mathcal F (H, u_0)\Vert_{C^1_\omega(M)} \leq \rho_1$, and the embedding $u: M \to M \times \R^2$  is such that $u\vert \Sigma = u_0 \vert \Sigma= \text{Id}_\Sigma$ and $\Vert u- u_0\Vert_{C^1_\omega(M)} \leq \rho_2$, then the linear para-homological equation in the unknown $(v, c)$,
$$
T_{M[u]} \begin{pmatrix} 0_2 & T_{S[u]} \\ 0_2 & 0_2 \end{pmatrix} T_{M[u]^{-1}}  v - 
T_{ M[u] } X_\xi \Big(T_{M[u]^{-1} }  v \Big)  + T_{M[u]}\Big( \sum_i  c_i [\chi_i] \Big) = f \,,
$$
has a linear solution operator 
$$
(v, c) = (v_1, v_2, c_1, c_2) =  (\mathcal L[u] (f),  P[u] (f)) =(\mathcal L_1[u] (f), \mathcal L_2[u] (f),  P_1[u] (f), P_2[u] (f) )
$$
such that the range of the operator  $\mathcal L[u]$ is contained in the subspace of functions vanishing at $\Sigma$, the range of the operator $P$
is finite dimensional, and the following estimate holds: for a function $\Phi$ increasing in all of its arguments, we have
$$
\Vert \mathcal L [u] (f) \Vert_{H^s_\omega(M)} + \vert P[u] (f) \vert \leq  C_s (K, \Vert H\Vert_{C^3_\omega(M)}, \xi) \Vert  f \Vert_{H^{s+2\sigma}_\omega(M)} \,.
$$
Moreover, the four linear operators of concern are all continuously differentiable mappings from $u\in C^1_\omega(M)$ to the space of linear operators (with operator norm). 
\end{lemma}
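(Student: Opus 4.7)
The plan is to factor out $T_{M[u]}$ on the left, reduce to a block-triangular cohomological system in $w := T_{M[u]^{-1}}v$, and then apply Corollary \ref{cor:CE} twice, once for each block row. The total loss of regularity will come out to $2\sigma$, matching the claimed estimate.

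First I would verify that $T_{M[u]}$ is invertible on each $H^s_\omega(M)$ for $u$ in the given ball. Since $M[u] = \mathrm{diag}(I_2,-I_2) + O(\Vert u-u_0\Vert_{C^1_\omega})$ in $L^\infty$-norm, Proposition \ref{prop:para-cont} shows that $T_{M[u]}-T_{\mathrm{diag}(I_2,-I_2)}$ has small operator norm, uniformly in $s$ on any bounded interval, when $\rho_2$ is small. Hence $T_{M[u]}$ is invertible by a Neumann series with $\Vert T_{M[u]}^{-1}\Vert_{\mathcal L(H^s_\omega,H^s_\omega)}\le 2$. Setting $g := T_{M[u]}^{-1} f\in H^{s+2\sigma}_\omega(M)$ and $w := T_{M[u]^{-1}} v$, the equation of the lemma reduces to the block-triangular system
\begin{equation*}
\begin{pmatrix} 0_2 & T_{S[u]} \\ 0_2 & 0_2 \end{pmatrix} w - X_\xi w + \sum_i c_i [\chi_i] = g.
\end{equation*}

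Next I would exploit this triangular structure. Writing $w=(w_1,w_2)$, the lower block reads $-X_\xi w_2 + (\sum_i c_i[\chi_i])_2 = g_2$, a pure vector cohomological equation decoupled from $w_1$. Fix $k\in\N$ large enough that order-$k$ vanishing on $\Sigma$ suffices for the final answer, and apply Corollary \ref{cor:CE}: define the counter-terms $c_{i,2}$ as the components of $g_2$ along a dual basis of the finite-dimensional obstruction space $\mathcal I^{s+2\sigma,k}_\xi(M)$. These functionals are continuous on $H^{s+2\sigma}_\omega(M)$ provided $s+2\sigma>s_k$, and the corollary then produces a unique zero-average solution $w_2\in H^{s+\sigma}_\omega(M)$ vanishing to order $k$ on $\Sigma$, with $\Vert w_2\Vert_{H^{s+\sigma}_\omega}\le C\Vert g\Vert_{H^{s+2\sigma}_\omega}$. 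The upper block then reads $T_{S[u]} w_2 - X_\xi w_1 + (\sum_i c_i[\chi_i])_1 = g_1$. By Proposition \ref{prop:para-cont} and the bound $\Vert S[u]\Vert_{L^\infty}\le K = K(\Vert H\Vert_{C^3_\omega},\rho_2)$, the right-hand side lies in $H^{s+\sigma}_\omega$, and a second application of Corollary \ref{cor:CE} yields $c_{i,1}$ and $w_1\in H^s_\omega(M)$ vanishing to order $k$ on $\Sigma$, absorbing the remaining $\sigma$ of derivative loss. Setting $v := T_{M[u]} w$ recovers a solution of the original para-homological equation satisfying the stated bound.

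For $C^1$-differentiability of the four solution maps $(\mathcal L_1,\mathcal L_2,P_1,P_2)$ in $u$, I would observe that $u\mapsto M[u],\,M[u]^{-1},\,S[u]$ are smooth nonlinear maps from a $C^1_\omega$-ball of $u_0$ into $L^\infty$, since they are rational in $Du$; that the assignment $a\mapsto T_a$ is linear and bounded from $L^\infty$ into $\mathcal L(H^s_\omega,H^s_\omega)$ by Proposition \ref{prop:para-cont}; and that the cohomological Green operator and the counter-term functionals of Theorem \ref{thm:CE} and Corollary \ref{cor:CE} depend only on $\xi$ and not on $u$. Composing these pieces gives $C^1$ dependence on $u\in C^1_\omega$.

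The main obstacle will be reconciling the $\Sigma$-vanishing assertion for $v$ with the final multiplication by $T_{M[u]}$: Corollary \ref{cor:CE} delivers $w$ vanishing to order $k$ on $\Sigma$, but one must verify that $v=T_{M[u]}w$ inherits vanishing to the required order. This requires pseudo-locality of para-products and the fact that $M[u]-\mathrm{const}$ is small in $C^1_\omega$, both standard but delicate near $\Sigma$ where canonical coordinates degenerate; it may force $k$ to be chosen strictly larger than the order of vanishing demanded of $v$. A secondary bookkeeping point is that the $\Sigma$-supported counter-term distributions (derivatives of Dirac masses of order up to $k$) must be continuous on $H^{s+2\sigma}_\omega$, forcing $s_k$ to grow linearly in $k$ by Sobolev embedding; this is the source of the linear growth of $h_s$ claimed in Theorem \ref{thm:main}.
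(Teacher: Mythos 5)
Your proposal follows essentially the same route as the paper's proof: invert $T_{M[u]}$ by a Neumann series, reduce to a block upper-triangular cohomological system for the transformed unknown, solve the decoupled lower block first (choosing the constants $c_{i,2}$ to annihilate the obstructions), feed that solution into the upper block, and solve again, with the total loss of $2\sigma$ derivatives appearing exactly as in the paper. The continuity/differentiability-in-$u$ argument is also the same, resting on Propositions \ref{prop:para-cont} and \ref{prop:para-comp}.

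There are two places where you actually improve on the paper's written proof, and one small slip worth flagging. First, the paper's proof invokes only Theorem~\ref{thm:CE}, which produces a solution with the correct Sobolev regularity but says nothing about vanishing at $\Sigma$; it never returns to justify the claim in the lemma's statement that the range of $\mathcal L[u]$ consists of functions vanishing at $\Sigma$. You instead invoke Corollary~\ref{cor:CE}, which augments the obstruction space by the $\Sigma$-supported distributions $\delta_\Sigma^{(j)}\circ G^s_\xi$ precisely to obtain that vanishing; this is the right tool and makes explicit a point the paper leaves implicit. Second, you rightly observe that vanishing of $\hat v=T_{M[u]^{-1}}v$ at $\Sigma$ does not automatically transfer to $v$; the paper does not discuss this. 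A small caveat: you wrote $v:=T_{M[u]}w$, but since the substitution is $w:=T_{M[u]^{-1}}v$ one recovers $v=(T_{M[u]^{-1}})^{-1}w$, which differs from $T_{M[u]}w$ by a smoothing operator (Proposition~\ref{prop:para-comp} applied to $M[u]\cdot M[u]^{-1}=\mathrm{Id}$). That correction does not change your concern; whichever inverse one uses, preservation of higher-order vanishing at $\Sigma$ is not free, because the hypothesis that $H$ equals $H_0$ near $\Sigma$ controls $M[u]$ near $\Sigma$ only through $u$ itself, and the para-product operator is nonlocal. Your suggestion to take $k$ larger than the target order of vanishing, combined with the fact that $M[u]-\mathrm{diag}(I_2,-I_2)$ vanishes at $\Sigma$ because $u|\Sigma=\mathrm{Id}_\Sigma$, is the natural way to close this; the paper's proof would benefit from the same remark.
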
 
\begin{proof}
We remark that by the definition of the matrix $S[u]$ we have
$$
\Vert S[u]  +I_2\Vert = \Vert S[u] - S[u_0] \Vert \leq C \Vert H \Vert_{C^3_\omega(M)} \Vert u-u_0 \Vert_{C^1_\omega(M)} \,.
$$
In fact, in our case we have that, 
$$
A[u_0] = \begin{pmatrix}   0_2   &   I_2 \\  0_2 & 0_2  \end{pmatrix} 
$$
and $Du_0^t =\begin{pmatrix} I_2& 0_2 \end{pmatrix}$, hence $L[u_0] =0$ and thus
$$
S[u_0] = \begin{pmatrix}  I_2 & 0_2 \end{pmatrix} \begin{pmatrix}   -I_2   &   0_2 \\  0_2 & I_2  \end{pmatrix}    \begin{pmatrix}  I_2 \\ 0_2 \end{pmatrix} = -I_2\,. 
$$
There exists $\rho_2>0$ such that for $\Vert u-u_0 \Vert_{C^1_\omega(M)} \leq \rho_2$, the para-products  $T_{M[u]}$ and $T_{M[u]^{-1}}$ are invertible with inverse 
such that 
$$  \Vert T^{-1}_{M[u]} -I \Vert_{H^s_\omega(M)} +  \Vert T^{-1}_{M[u]^{-1}} -I \Vert_{H^s_\omega(M)}   \leq 1/2\,.
$$
The equation can then be written as a system of two systems of 2 equations equations for the unknown vector-valued function 
$\hat v := T_{M[u]^{-1}}  v$:
$$
\begin{cases}  
T_{S[u]}  \hat v_2 -  X_\xi \hat v_1  + \sum_i  c_{i,1} [\chi_{i}]_1  =  (T^{-1}_{M[u]}  f )_1 \,, \\ \quad\quad \quad
-  X_\xi \hat v_2  + \sum_i  c_{i,2} [\chi_{i}]_2  =  (T^{-1}_{M[u]}  f )_2\,,
 \end{cases}
$$
This system can be solved by first solving the second equation, for an appropriate choice of the constants $\{c_{i,2}\}$. The solution
$\hat v_2$ which unique up to additive constants, can be plugged in the first equation, which can then be solved for an appropriate choice 
of the constants $\{c_{i,1}\}$.  More precisely the equation
$$
-  X_\xi \hat v_2  + \sum_i  c_{i,2} [\chi_{i}]_2  =  (T^{-1}_{M[u]}  f )_2
$$
can be solved in $H^{s+\sigma}_\omega(M)$  if (and only if) $c_{i,2} = D_i \Big( (T^{-1}_{M[u]}  f )_2\Big)$, for all invariant distributions 
$D_i$ in $H^{-(s+ 2\sigma)}_\omega(M)$, hence there exists a constant $C_{2,s}(\xi) >0$ such that 
$$
\sum_i \vert c_{i,2} \vert  \leq  C_{2,s} (\xi)  \Vert u \Vert_{C^1_\omega(M)} \Vert f \Vert_{H^{s+2\sigma}_\omega(M)}\,.
$$
The solution satisfies the estimate 
$$
\begin{aligned}
\Vert  v_2 \Vert_{H^{s + \sigma}_\omega(M)} &\leq  C'_{2,s} (\xi)  \Vert (T^{-1}_{M[u]}  f )_2- \sum_i  c_{i,2} [\chi_{i}]_2\Vert_{H^{s + 2\sigma}_\omega(M)}
\\ & \leq  C''_{2,s} (\xi)  \Vert u \Vert_{C^1_\omega(M)}  \Vert f \Vert_{H^{s + 2\sigma}_\omega(M)} \,.
\end{aligned}
$$
The first equation can be then solved in $H^s_\omega(M)$  if 
$$
c_{i,1} = D_i \Big (  (T^{-1}_{M[u]}  f )_1 - T_{S[u]}  \hat v_2\Big)\,,
$$
for all invariant distributions 
$D_i$ in $H^{-(s+ \sigma)}_\omega(M)$, hence there exists a constant $C_{1,s}(\xi) >0$ such that 
$$
\sum_i \vert c_{i,2} \vert \leq \Vert  (T^{-1}_{M[u]}  f )_1 - T_{S[u]}  \hat v_2   \Vert_{H^{s+\sigma}_\omega(M)}  \leq  C_{1,s} (\xi)  \Vert u \Vert_{C^1_\omega(M)} \Vert f \Vert_{H^{s+2\sigma}_\omega(M)}\,.
$$
The solution of the first equation then satisfies the bound
$$
\begin{aligned}
\Vert  v_1 \Vert_{H^{s}_\omega(M)} &\leq  C'_{2,s} (\xi)  \Vert  (T^{-1}_{M[u]}  f )_1 - T_{S[u]}  \hat v_2-  \sum_i  c_{i,1} [\chi_{i}]_1\Vert_{H^{s + \sigma}_\omega(M)}
\\ & \leq  C''_{2,s} (\xi)  \Vert u \Vert_{C^1_\omega(M)}  \Vert f \Vert_{H^{s + 2\sigma}_\omega(M)} \,.
\end{aligned}
$$
Finally, continuous differentiability of the operators for $u\in C^1_\omega(M)$  follows immediately from properties of para-products, that is, continuous differentiability of 
$T_a$ in $a$ and $T_{ab}- T_aT_ b$ with respect to $(a, b)\in (L^{\infty})^2$.
\end{proof}

\section{Conclusion and open questions}

In this section we proceed to prove the main theorem.

\begin{proof} [Proof of Theorem~\ref{thm:main}]
Let  $N_s$ be the smallest integer $>2s-1$. Let $H \in C^{N_s+4}_\omega(M)$ so that the Hamiltonian vector field $X_H$ has coefficients in $C^{N_s+3}_\omega(M)$.  We can assume $\Vert u \Vert_{H^s_\omega(M)} \leq 1$, that  $\rho_1$, $\rho_2>0$ are as in Lemma \ref{lemma:solutions}
and also assume that the hypotheses of Lemma \ref{lemma:solutions} are satisfied, that is
\begin{equation}
\label{eq:apriori_conditions}
\Vert u \Vert_{H^s_\omega(M)} \leq 1\,, \quad  \Vert \mathcal F (H, u_0) \Vert_{C^1_\omega(M)} \leq \rho_1\,, \quad  \Vert u-u_0\Vert _{C^1_\omega(M)} \leq \rho_2 \,.
\end{equation}
The para-cohomological equation has the form
$$
u-u_0 = - \mathcal{L}[u] \Bigl (\mathcal{F}_\xi(H, u_0) +  \mathcal R_{PL}[\mathcal F (H, u), u-u_0 ] (u-u_0) + \mathcal R_{CM}[u] (u-u_0)   \Big)
$$
By Lemma~\ref{lemma:solutions}  we have
$$
\Vert  \mathcal{L}[u] \Bigl (\mathcal{F}_\xi(H, u_0) \Big) \Vert_{H^s_\omega(M)} \leq C_s (K, \Vert H\Vert_{C^3_\omega(M)}, \xi) \Vert  \mathcal{F}_\xi(H, u_0)  \Vert_{H^{s+2\sigma}_\omega(M)} 
$$
We also have, for $2t-1 >  s+ 2\sigma$, 
$$
\begin{aligned}
\Vert \mathcal{L}[u] &\Bigl ( \mathcal R_{PL}[\mathcal F (H, u), u-u_0 ] (u-u_0) \Big)\Vert_{H^s_\omega(M)}  \\&\quad \leq C_s (K, \Vert H\Vert_{C^3_\omega(M)}, \xi) \Vert 
\mathcal R_{PL}[\mathcal F (H, u), u-u_0 ] (u-u_0) \Vert_{H^{s+ \sigma}_\omega(M)}  \\ &\quad \leq  C_s (K, \Vert H\Vert_{C^3_\omega(M)}, \xi) C_s 
\Big(  \Vert  \mathcal{F}_\xi(H, u_0)  \Vert_{H^{s+2\sigma}_\omega(M)}  \Vert u-u_0 \Vert_{H^t_\omega(M)}  \\ & \qquad \qquad\qquad\qquad \qquad\qquad \qquad+ \Vert H \Vert_{C^{N_s+4}_\omega(M)}  \Vert u-u_0 \Vert_{H^t_\omega(M)} ^2\Big)\,.
\end{aligned}
$$
The regularizing remainder  $\mathcal R_{CM}[u] (u-u_0)$ is equal to the expression 
$$
\begin{aligned}
\mathcal R_{CM}[u] (u-u_0)&=  Op \Big( M[u] \begin{pmatrix} 0_2 & S[u] \\ 0_2 & 0_2 \end{pmatrix} M[u]^{-1}  \Big) \\ & -  Op\big(M[u] X_\xi M[u]^{-1} \Big)
- T_{M[u]} \begin{pmatrix} 0_2 & T_{S[u]} \\ 0_2 & 0_2 \end{pmatrix} T_{M[u]^{-1}}  + T_{M[u]} X_\xi T_{M[u]^{-1}}\,.
\end{aligned}
$$
Since  for $s >t > 1+ (s+\sigma)/2$  we have that $ \Vert S[u] \Vert_{ C_\omega^{s+ \sigma-t}(M) }$  is bounded by an increasing function for $\Vert H \Vert_{ H^{N_s}_\omega(M)}$ and  $\Vert u-u_0 \Vert_{ H^t_\omega(M)}$,   and  in addition
$$
\Vert M[u] - \textrm{diag} (I_2, -I_2)  \Vert_{ C_\omega^{s+ \sigma-t}(M) } \leq \Vert u-u_0 \Vert_{ H^t_\omega(M)}\,,
$$
 we derive the estimate
$$
\Vert \mathcal R_{CM}[u] (u-u_0) \Vert_{ H^{s+ \sigma}_\omega(M)} \leq  C'_s \Vert u- u_0 \Vert^2_ { H^t_\omega(M)}  \,,
$$
which in turn implies by Lemma  \ref{lemma:solutions} that 
$$
\begin{aligned}
\Vert \mathcal{L}[u] \Bigl ( \mathcal R_{CM}[u] (u-u_0) \Big)\Vert_{H^s_\omega(M)}  & \leq C_s (K, \Vert H\Vert_{C^3_\omega(M)}, \xi) \Vert 
\mathcal R_{CM}[u] (u-u_0)\Vert_{H^{s+ \sigma}_\omega(M)} \\ & \leq C_s (K, \Vert H\Vert_{C^3_\omega(M)}, \xi)  C'_s \Vert u- u_0 \Vert^2_ { H^t_\omega(M)}\,.
\end{aligned}
$$
We then argue that under conditions \eqref{eq:apriori_conditions},  if  $\Vert  \mathcal{F}_\xi(H, u_0)  \Vert_{H^{s+2\sigma}_\omega(M)}$ is sufficiently small, there exists $\rho>0$ such that if $\Vert u- u_0 \Vert_ { H^t_\omega(M)} \leq \rho$, then
$$
\begin{aligned}
C_s &(K, \Vert H\Vert_{C^3_\omega(M)}, \xi)  \Vert  \mathcal{F}_\xi(H, u_0)  \Vert_{H^{s+2\sigma}_\omega(M)}   \\ &+  C_s (K, \Vert H\Vert_{C^3_\omega(M)}, \xi) C_s \Big( \Vert  \mathcal{F}_\xi(H, u_0)   \Vert_{H^{s+2\sigma}_\omega(M)}  \rho +  \Vert H \Vert_{C^{N_s+4}_\omega(M)} \rho^2\Big)  \\ & \qquad \qquad+ C_s (K, \Vert H\Vert_{C^3_\omega(M)}, \xi)  C'_s  \rho^2 \leq \rho\,.
\end{aligned}
$$
Let then $\mathcal  S$ denote the operator defined  for all functions $u\in B_{H^t_\omega(M)} (u_0, \rho)$, that is, such that $u \vert\Sigma = u_0 \vert \Sigma= \text{Id}_\Sigma$ 
and $\Vert u -u_0 \Vert_{H^t_\omega(M)} < \rho$ as
$$
\mathcal S(u) := u_0 - \mathcal{L}[u] \Bigl (\mathcal{F}_\xi(H, u_0) +  \mathcal R_{PL}[\mathcal F (H, u), u-u_0 ] (u-u_0) + \mathcal R_{CM}[u] (u-u_0)   \Big)\,.
$$
We have proved above that $\mathcal S: B_{H^t_\omega(M)} (u_0, \rho) \to B_{H^s_\omega(M)} (u_0, \rho)$ with $s>t$, and since the embedding 
$H^s_\omega(M)$ into $H^t_\omega(M)$ is compact, by the Schauder fixed point theorem the operator $\mathcal S$ has a fixed point $u \in B_{H^t_\omega(M)} (u_0, \rho)$.
In fact, for sufficiently small  $\rho>0$ it is possible to apply the inverse function theorem in the Hilbert space $H^s_\omega(M)$, hence the fixed point is unique
and depends smoothly with respect to the Hamiltonian $H$.  In addition, there exists a constant $C(\xi, \Vert H \Vert_{C^{N_s +4}_\omega(M)}) >0$ such that
$$
\Vert u- u_0 \Vert_{H^t_\omega(M)} \leq C(\xi, \Vert H \Vert_{C^{N_s +4}_\omega(M)}) \cdot \Vert \mathcal F_\xi (H, u)  \Vert_{H^{s+2\sigma}_\omega(M)} \,.
$$

\smallskip
For the fixed point we have the identity
$$
\mathcal F_\xi (H, u) = Op \Big ( B[u, L[u], \mathcal F_\xi (H, u)] M(u)^{-1}  \Big) (u-u_0)  +  T_{M[u]} \Big(\sum_i  P_i[u] \chi_i  \Big)\,.
$$
Since the fixed point is given as a smooth function $u:=u(H)$ of the Hamiltonian $H$, defined locally on a neighborhood of $H_0$, the condition
$$
P[ u(H)] =0 \,,
$$ 
describes a finite codimension local submanifold of the space of Hamiltonians, as soon as we can prove that the differential of the function
$P[u(H)]$  (as a function of the Hamiltonian $H$) is surjective at $H=H_0$. 

For Hamiltonians $H$ such that $P[u(H)] =0$ the fixed point equation becomes
$$
\mathcal F_\xi (H, u) = Op \Big ( B[u, L[u], \mathcal F_\xi (H, u)] M(u)^{-1}  \Big) (u-u_0)  
$$
which implies $\mathcal F_\xi (H, u) =0$ by the contraction mapping principle (as in \cite{AlSh}).  Indeed, the right hand side can be viewed
(for fixed $u\in B_{H^t_\omega(M)} (u_0, \rho)$) as a contraction  operator $\mathcal B_u$ (with respect for instance to the $H^t_\omega(M)$ norm)  acting on
 $\mathcal F_\xi (H, u)$ and vanishing for $\mathcal F_\xi (H, u)=0$. To prove that $\mathcal B_u$ is a contraction we estimate
 $$
\begin{aligned} &\Vert Op \Big ( B[u,L, E] M(u)^{-1}  \Big) (u-u_0) \Vert_{H^t_\omega(M)}   \\ &\quad \leq    C_s C(\xi, \Vert H \Vert_{C^{N_s+4}_\omega(M)}) 
 \Vert \mathcal F_\xi (H, u_0) \Vert_{H^{s+2\sigma}_\omega(M)} \Vert  B[u, L[u], \mathcal F_\xi (H, u)] M(u)^{-1}   \Vert_{L^\infty(M)}    \,.
\end{aligned}
$$
By the expression of $B(u,L, E)$ in Lemma~\ref{lemma:lin_id}, there exists a constant $C_B >0$ such that, under the hypothesis that 
$\Vert u-u_0 \Vert_{C^2_\omega(M)}$ and $ \Vert L \Vert_{C^0_\omega(M)}$ are sufficiently small, we have 
$$
\Vert B(u,L, E) \Vert_{L^\infty(M)} \leq  C_B  \max \Big\{  \Vert L \Vert_{C^0(M)},  \Vert E\Vert_{C^1_\omega(M)} \big\} \,.
$$
In addition, by Lemma \ref{lemma:L}, there exists $t_0>0$ such that for $t>t_0$ and for a constant $C_t(\xi)>0$  we have
$$
\Vert L[u] \Vert _{C^0(M)}  \leq   C_{t}(\xi)  \Vert u \Vert_{H^t_\omega(M)} \Vert D\mathcal{F}_\xi(H, u)  \Vert_{H^t_\omega(M)} \,,
$$
hence, as $ \Vert u \Vert_{H^t_\omega(M)}$ is bounded, there exists  a constant $C'_t(\xi)>0$ such that
$$
\Vert B[u,L[u], \mathcal F_\xi (H, u) ] M(u)^{-1} \Vert_{L^\infty(M)}  \leq C_t(\xi) \Vert \mathcal F_\xi (H, u) \Vert_{H^t_\omega(M)}\,.
$$
Thus, if $C_s C_t C(\xi, \Vert H \Vert_{C^{N_s+4}}) \cdot
 \Vert \mathcal F_\xi (H, u_0) \Vert_{H^{s+2\sigma}_\omega(M)}  <1$ we have that $\mathcal B_u$ is a contraction
 with respect to the norm of the Sobolev space $H^t_\omega(M)$ and the identity
 $$
 (I - \mathcal B_u) \mathcal F_\xi (H, u) =0
 $$
indeed implies that $\mathcal F_\xi (H, u) =0$.

\smallskip
Finally we address the surjectivity of the differential at $H=H_0$ (and as a consequence $u=u_0$) of the function $P [u(H)]$, which implies that the function is a local
submersion, hence its zero locus is a $C^1$ submanifold.  The linearization of the para-cohomological equation \eqref{eq:para-cohom} reads
$$
 X_\xi v  +  \Big(  \sum_i d_i [\chi_i]  \Big)     = - D_H\mathcal{F}_\xi(H_0, u_0) (h)    \,.
$$
since $M[u_0]=Id$, $S[u_0]=0$,  and the terms $\mathcal R_{PL}$, $\mathcal R_{CM}$ are quadratic in $u-u_0$.  By Theorem~\ref{thm:CE}  and Corollary \ref{cor:CE},
 the above equation has a solution for the appropriate choice of the coefficients $(d_i)$ to ensure the vanishing of the obstructions. Such coefficients give the value of the
 differential of the map $P[u(H)]$ at the tangent vector $h$ representing the variation of the Hamiltonian $H$ at $H_0$. It is clearly possible to find a variation $h$ such that
 the values of the coefficients $(d_i)$ is any given vector of coefficients,  as long as the obstructions are chosen to be linearly independent functionals. This argument implies that the
 map $P[u(H)]$ is local submersion, as a function of $H$, with finite rank, hence its zero set is a local $C^1$ submanifold of finite codimension (by the implicit function theorem
 in Banach spaces).

\end{proof}

We conclude posing a couple of natural open questions:

\begin{question}  For  translation surfaces of higher genus and non-integrable rational billiards, are there arbitrarily small smooth perturbations such that the perturbed Hamiltonian does not have any 
invariant surface in the homology class of the zero section, or does not have any invariant surface which is a (Lipschitz) continuous graph over the zero section ?
\end{question}

\begin{question} For translation surfaces of higher genus and non-integrable rational billiards, are there any non-trivial smooth perturbations such that the perturbed Hamiltonian has a positive measure set
of the phase space foliated by invariant surfaces homologous to the zero section~? or at least an infinite number of distinct invariant surfaces homologous to the zero section~?
\end{question} 

In the completely integrable case,  by the KAM theory, the answer to the first question is negative and the answer to the second question is positive, for all sufficiently small smooth perturbation.

\end{document}